\documentclass[12pt,reqno]{amsart}
\usepackage{latexsym,amsmath,amsfonts,amssymb,amsthm}
\textwidth=420pt \evensidemargin=0pt \oddsidemargin=0pt

\def\Z{\mathbb Z}

\def\F{\mathbb F}

\def\Aut{{\rm{Aut\;}}}
\def\Inn{{\rm{Inn\;}}}

\theoremstyle{plain}
\newtheorem{Thm}{Theorem}
\newtheorem{Lem}{Lemma}
\newtheorem{Cor}{Corollary}

\theoremstyle{definition}

\theoremstyle{remark}

\pagestyle{plain}

\begin{document}
\title{Restricted Sumsets in Finite Nilpotent Groups}
\author{Shanshan Du}
\email{ssdu.stand@gmail.com}
\author{Hao Pan}
\email{haopan79@yahoo.com.cn}
\address{Department of Mathematics, Nanjing University,
Nanjing 210093, People's Republic of China}
\keywords{Restricted sumsets}
\subjclass[2010]{Primary 11P70; Secondary 11B13}
\begin{abstract}
Suppose that $A,B$ are two non-empty subsets of the finite nilpotent group $G$. If $A\not=B$, then the cardinality of the restricted sumset
$$A\dotplus B=\{a+b:\,a\in A,\ b\in B,\ a\neq b\}
$$
is at least
$$\min\{p(G),|A|+|B|-2\},$$
where $p(G)$ denotes the least prime factor of $|G|$.
\end{abstract}

\maketitle

\section{Introduction}
\setcounter{equation}{0}\setcounter{Thm}{0}\setcounter{Lem}{0}\setcounter{Cor}{0}

Suppose that $p$ is a prime and $A,B$ are two non-empty subsets of $\Z_p=\Z/p\Z$. The classical Cauchy-Davenport theorem (cf. \cite[Theorem 2.2]{N}) says that the sumset
$$
A+B=\{a+b:\,a\in A,\ b\in B\}
$$
contains at least
$$
\min\{p,|A|+|B|-1\}
$$
elements. In \cite{EH}, Erd\H os and Heilbronn considered the cardinality of the restricted sumset
$$
A\dotplus B=\{a+b:\,a\in A,\ b\in B,\ a\neq b\}.
$$
They conjectured that for non-empty $A\subseteq\Z_p$,
$$
|A\dotplus A|\geq\min\{p,2|A|-3\}.
$$
This conjecture was confirmed by Dias da Silva and Hamidoune \cite{DH}, with help of the exterior algebra. In 1996, using the polynomial method, Alon, Nathanson and Ruzsa \cite{ANR} gave a simple proof of the Erd\H os-Heilbronn conjecture.
In fact, they obtained a stronger result:
\begin{equation}\label{anr}
|A\dotplus B|\geq\min\{p,|A|+|B|-2\},
\end{equation}
provided $|A|\neq |B|$. Obviously, arbitrarily choosing $B\subseteq A$ with $|B|=|A|-1$, we have
$$
|A\dotplus A|\geq |A\dotplus B|\geq |A|+|B|-2=2|A|-3.
$$
Recently, K\'arolyi \cite{K4} considered the exceptional case that $|A|=|B|$  and $A\neq B$. He proved that (\ref{anr}) always holds as long as $A\neq B$.

On the other hand,  in \cite{O}, Olson proved that for any finite group $G$ and non-empty $A,B\subseteq G$, there exist a non-empty subset $C\subseteq A+B$ and a subgroup $H$ of $G$ with $H+C=C$ or $C+H=C$ such that
$$
|C|\geq |A|+|B|-|H|.
$$
Here for convenience, we still use ``$+$'', rather than ``$\times$'', to represent the binary operation over $G$.
It easily follows from Olson's result that
\begin{equation}\label{cdg}
|A+B|\geq\min\{p(G),|A|+|B|-1\},
\end{equation}
where $p(G)$ denotes the least prime factor of $|G|$. This is an extension of the Cauchy-Davenport theorem for finite groups.
In \cite{K1,K2}, K\'arolyi established the following generalizaton of the the Erd\H os-Heilbronn problem:
$$
|A\dotplus A|\geq\min\{p(G),2|A|-3\},
$$
where $A$ is a non-empty subset of the finite abelian group $G$.
Subsequently, Balister and Wheeler \cite{BW} removed the restriction that $G$ is abelian. In fact, they showed that
$$
|A\dotplus B|\geq\min\{p(G),|A|+|B|-3\},
$$
for any non-empty subsets $A,B$ of a finite group $G$.

In this paper, we shall consider the extension of (\ref{anr}) for finite nilpotent groups.
\begin{Thm}\label{t1}
Suppose that $G$ is a finite nilpotent group. Let $A,B$ be two non-empty subset of $G$. If $A\not=B$, then
$$|A\dotplus B|\geq\min\{p(G),|A|+|B|-2\}.$$
\end{Thm}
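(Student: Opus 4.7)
The plan is to proceed by induction on $|G|$. The base case, $|G|$ a prime $p$, follows from K\'arolyi \cite{K4} (for $|A|=|B|$ with $A\neq B$) together with the Alon--Nathanson--Ruzsa inequality \eqref{anr} (for $|A|\neq |B|$), which together yield the required bound whenever $A\neq B$ in $\Z_p$.

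For the inductive step, I would exploit the product structure of finite nilpotent groups. Writing $G=P_1\times\cdots\times P_k$ as the direct product of its Sylow subgroups and letting $p=p(G)$, pick an element of order $p$ in $Z(P_1)\subseteq Z(G)$; it generates a central subgroup $H\leq Z(G)$ of order $p$. Form $\bar G=G/H$ (still nilpotent, with $p(\bar G)\geq p$) via the projection $\pi:G\to \bar G$, and set $\bar A=\pi(A)$, $\bar B=\pi(B)$. For each $\bar c\in \bar G$, write $A_{\bar c}=A\cap \pi^{-1}(\bar c)$ and $B_{\bar c}=B\cap \pi^{-1}(\bar c)$, naturally identified with subsets of $H\cong \Z_p$.

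The restricted sumset decomposes along cosets of $H$: if $\bar c\neq \bar d$, then every pair $(a,b)\in A_{\bar c}\times B_{\bar d}$ automatically satisfies $a\neq b$, so $A_{\bar c}+B_{\bar d}\subseteq (A\dotplus B)\cap \pi^{-1}(\bar c+\bar d)$; when $\bar c=\bar d$, only $A_{\bar c}\dotplus B_{\bar c}$ contributes. I would then split into two cases. If $\bar A\neq \bar B$, the inductive hypothesis provides a lower bound on $|\bar A\dotplus \bar B|$, which combined with Cauchy--Davenport-type estimates on each $H$-fibre (using \eqref{cdg} inside $\Z_p$) upgrades to the desired bound on $|A\dotplus B|$. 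If $\bar A=\bar B$, the hypothesis $A\neq B$ forces some coset $\bar c$ with $A_{\bar c}\neq B_{\bar c}$, and the base-case K\'arolyi bound inside $H\cong \Z_p$ supplies exactly the ``$+1$'' improvement over the Balister--Wheeler baseline $|A|+|B|-3$.

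I expect the main obstacle to be the aggregation step: assembling the per-coset contributions into the clean global bound $\min\{p(G),|A|+|B|-2\}$. Delicate subcases include singleton fibres (which contribute empty restricted sums), saturation in the quotient (where induction only yields $p(\bar G)$ rather than $|\bar A|+|\bar B|-2$ and must be spent before it is wasted), and on-diagonal fibres in Case~1 where Balister--Wheeler loses a full $-3$ that must be absorbed by off-diagonal contributions. Overcoming these difficulties will likely require a judicious choice of which coset plays the ``sharpening'' role---possibly re-selecting the central subgroup $H$ or performing a preliminary translation of $A,B$ to guarantee the existence of a fibre with $A_{\bar c}\neq B_{\bar c}$---so that the K\'arolyi gain inside $\Z_p$ can be cleanly added to the bound inherited from the quotient.
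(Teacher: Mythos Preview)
Your outline matches the paper's architecture at a coarse level---induction, a central subgroup, decomposition into $H$-fibres, and a split according to whether $\bar A=\bar B$---but it misses the one genuinely hard subcase, and your proposed remedy for it does not work.

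Consider the situation $\bar A=\bar B$ with every fibre a singleton (so $|A|=|B|=n=|\bar A|$). The Erd\H os--Heilbronn bound on the quotient gives $|\bar A\dotplus\bar A|\ge 2n-3$, and since each coset of $\bar A\dotplus\bar A$ contributes at least one element to $A\dotplus B$, you have $|A\dotplus B|\ge 2n-3$. You claim that the unique fibre $\bar c$ with $A_{\bar c}\neq B_{\bar c}$ supplies the missing ``$+1$'' via K\'arolyi in $\Z_p$. But if $S=\{s\}$ and $T=\{t\}$ are distinct singletons, K\'arolyi's inequality only says $|S\dotplus T|\ge |S|+|T|-2=0$; the actual single element $s+t$ lies over $2\bar c$, and if $2\bar c\in\bar A\dotplus\bar A$ it may coincide with an element already counted. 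So in the configuration where $|\bar A\dotplus\bar A|=2n-3$ exactly, every differing fibre has $2\bar c\in\bar A\dotplus\bar A$, and every non-differing fibre has $2\bar c\notin\bar A\dotplus\bar A$, your aggregation yields only $2n-3$ and stalls. No amount of re-selecting $H$ or translating avoids this: the obstruction is intrinsic to $\bar A$.

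The paper's way out is not a sharper counting argument but an \emph{inverse theorem}: it proves (Theorem~\ref{t2} and Corollary~\ref{c1}) that whenever $|\bar A\dotplus\bar A|=2|\bar A|-3$ with $|\bar A|<(p+3)/2$, the set $\bar A$ is commutative and (for $|\bar A|\ge 5$) an arithmetic progression. With this structural information in hand, one checks directly that the stalled configuration above is contradictory: writing $\bar A=\{\bar a,\bar a+\bar d,\ldots,\bar a+(n-1)\bar d\}$ and comparing $a_j+b_1$ with $a_1+b_j$ forces the fibre discrepancies to propagate along the progression, which is incompatible with the requirement that fibres over endpoints $\bar a+\bar a$ and $\bar a+(n-1)\bar d+\bar a+(n-1)\bar d$ (which lie outside $\bar A\dotplus\bar A$) agree. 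Your proposal contains no analogue of this step; until you either prove such an inverse result or find a substitute mechanism for this specific subcase, the argument is incomplete.
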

As we shall see later, in order to complete the proof of Theorem \ref{t1}, we need to discuss the structure of $A$ when $|A\dotplus A|=2|A|-3$.

For $A\subseteq\Z_p$ with $|A|<(p+1)/2$, if $|A+A|=2|A|-1$, then a theorem of Vosper (cf. \cite[Theorem 2.1]{N}) says that $A$ must be an arithmatic progression.
In \cite{K3}, K\'arolyi proved that if $A$ is a subset of the finite abelian group $G$ satisfying $5\leq |A|<(p(G)+3)/2$, then
$|A\dotplus A|=2|A|-3$ if and only if $A$ is an arithmatic progression. Now we shall prove that
\begin{Thm}\label{t2}
Let $G$ be a finite group and $A$ be a non-empty subset of $G$ with $|A|<(p(G)+3)/2$.
Suppose that
$$
|A\dotplus A|=2|A|-3.
$$
Then $A$ is commutative, i.e., $a_1+a_2=a_2+a_1$ for any $a_1,a_2\in A$.
\end{Thm}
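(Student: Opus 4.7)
My plan is induction on $n=|A|$. The base case $n\leq 2$ is immediate: when $n=2$ the hypothesis $|A\dotplus A|=1$ says $a_1+a_2=a_2+a_1$. Assume now $n\geq 3$ and that the theorem holds for all smaller sets; suppose for contradiction that $A$ contains $a,b$ with $a+b\neq b+a$, while $|A\dotplus A|=2n-3<p(G)$.

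Setting $B=A\setminus\{b\}$, I decompose $A\dotplus A=(B\dotplus B)\cup L_b\cup R_b$, where $L_b=\{b+x:x\in B\}$ and $R_b=\{x+b:x\in B\}$ both have cardinality $n-1$. Since $2n-5<p(G)$, the Balister--Wheeler theorem gives $|B\dotplus B|\geq 2n-5$, and combining with $|A\dotplus A|=2n-3$ yields the fundamental estimate
$$
\bigl|(L_b\cup R_b)\setminus(B\dotplus B)\bigr|\leq 2. \qquad(\star)
$$
The distinct elements $b+a\in L_b$ and $a+b\in R_b$ already threaten to saturate the slack in $(\star)$. I would then split into sub-cases according to the sharpness of Balister--Wheeler on $B$. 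If $|B\dotplus B|\geq 2n-4$, the slack in $(\star)$ drops to $1$, forcing one of $b+a,a+b$ to equal some $x+y$ with $x,y\in B$ distinct; such an alternative representation rigidly pins down $b$ (or $a$) in terms of elements of $B$, which I would eliminate by iterating the argument after deleting a third element $c\in B\setminus\{a\}$. If instead $|B\dotplus B|=2n-5$, the inductive hypothesis yields that $B$ is commutative; combined with $(\star)$ and the distinctness of $b+a$ and $a+b$, one should deduce that every $x\in B\setminus\{a\}$ commutes with $b$, after which a symmetric argument (deleting $a$ in place of $b$) delivers the contradiction.

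The main obstacle is this second sub-case: converting the commutativity of $B$ together with the purely set-theoretic bound $(\star)$ into the sharp algebraic conclusion that $b$ commutes with every $x\in B\setminus\{a\}$. This requires a careful accounting of when $b+x$ or $x+b$ coincides with some element of $B\dotplus B$, and the commutativity of $B$ will be essential to upgrade the combinatorial information in $(\star)$ into algebraic relations among the elements of $A$. Once this sharp conclusion is in hand, the symmetric deletion immediately traps $a+b=b+a$, contrary to the original assumption.
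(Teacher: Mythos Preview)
Your plan is a genuine sketch with a real gap, and it diverges completely from the paper's method.

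The paper does \emph{not} induct on $|A|$. Instead it inducts on the group structure: since $p(G)\geq 3$ forces $|G|$ odd, Feit--Thompson makes $G$ solvable, and Balister--Wheeler supply a proper normal subgroup $H\trianglelefteq G$ with $\sigma(H)=H$ and $G/H$ isomorphic to the additive group of a finite field on which the induced map acts by a scalar. One decomposes $A=\bigcup_j(a_j+S_j)$ into $H$-cosets and applies the inductive hypothesis in $G/H$ and in $H$. A crucial point is that the restriction of the problem to a single coset produces a \emph{twisted} restricted sumset $\tau_{a}\sigma(S)\stackrel{\tau_{a}\sigma}{+}S$, so the paper is forced to prove a stronger statement (Theorem~\ref{t3}) about $\sigma$-commutativity for arbitrary odd-order automorphisms $\sigma$. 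The delicate residual cases (two or three cosets, each of size $\leq 2$) are then disposed of by an explicit, rather intricate case analysis (Lemma~\ref{xy}).

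Your obstacle is exactly where you say it is, and I do not see how to get past it. In the sub-case $|B\dotplus B|=2n-5$ you know $B$ is commutative and that all but two elements of $L_b\cup R_b$ lie in $B\dotplus B$; you want to conclude that $b$ commutes with every $x\in B\setminus\{a\}$. But $(\star)$ gives no control over \emph{which} elements fall inside $B\dotplus B$, and the inclusion $b+x\in B\dotplus B$ in no way implies $b+x=x+b$: the sets $L_b=b+B$ and $R_b=B+b$ each have $n-1$ elements and could embed into the $(2n-5)$-element set $B\dotplus B$ in many ways without being equal elementwise. Commutativity of $B$ alone does not convert this set-theoretic containment into an algebraic identity. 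The other sub-case is no better: a single relation $b+a=x+y$ with $x,y\in B$ does not ``rigidly pin down'' $b$ in any usable sense, and deleting a further element just reproduces the same difficulty one size down. Pure cardinality bounds of the shape $(\star)$ appear too coarse to force commutativity; the paper succeeds precisely because it replaces induction on $|A|$ with structural induction through a normal subgroup, where the quotient is essentially a field and the polynomial method (Lemma~\ref{field}) applies.
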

In view of Theorem \ref{t2}, we know that if $|A\dotplus A|=2|A|-3$, then the subgroup generated by $A$ is actually abelian. Thus
\begin{Cor}\label{c1}
Under the assumptions of Theorem \ref{t2}, if $|A|=n\geq 5$, then $A=\{a,a+d,a+2d,\ldots,a+(n-1)d\}$ where $a,d\in G$ and $a+d=d+a$.
\end{Cor}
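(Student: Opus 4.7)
The plan is to combine Theorem~\ref{t2} with K\'arolyi's structure theorem from \cite{K3}, which already settles the abelian version of the corollary. Under the hypotheses, Theorem~\ref{t2} guarantees that $a_1+a_2=a_2+a_1$ for every $a_1,a_2\in A$, so the subgroup $H=\langle A\rangle\leq G$ generated by $A$ is abelian, and of course $A\subseteq H$.

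Next, I would check that $H$ inherits the relevant arithmetic hypothesis. By Lagrange's theorem $|H|$ divides $|G|$, so every prime dividing $|H|$ also divides $|G|$, giving $p(H)\geq p(G)$. Consequently, the assumption $|A|<(p(G)+3)/2$ upgrades to $|A|<(p(H)+3)/2$, and the pair $(H,A)$ satisfies the hypotheses of K\'arolyi's theorem from \cite{K3}: a finite abelian ambient group and a subset of size $n$ with $5\leq n<(p(H)+3)/2$ and $|A\dotplus A|=2|A|-3$.

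Applying that theorem, $A$ must be an arithmetic progression in $H$, i.e., $A=\{a,a+d,a+2d,\ldots,a+(n-1)d\}$ for some $a,d\in H$. Since $H$ is abelian, the relation $a+d=d+a$ is automatic, and the corollary follows. In this approach there is no genuine obstacle beyond what is already resolved by Theorem~\ref{t2}; the corollary is essentially a packaging statement that uses Theorem~\ref{t2} to transfer the problem from the possibly non-abelian $G$ to the abelian $H$, where the known structural result applies verbatim.
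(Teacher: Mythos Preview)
Your argument is correct and is precisely the one the paper intends: the paper's entire justification for Corollary~\ref{c1} is the sentence preceding it (``the subgroup generated by $A$ is actually abelian''), after which K\'arolyi's abelian inverse theorem from \cite{K3} applies. You have simply made explicit the Lagrange step $p(H)\geq p(G)$ and the routine fact that pairwise-commuting generators yield an abelian subgroup, both of which the paper leaves to the reader.
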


\section{Proof of Theorem \ref{t1}}
\setcounter{equation}{0}\setcounter{Thm}{0}\setcounter{Lem}{0}\setcounter{Cor}{0}

In this section, we shall give the most part of the proof of Theorem \ref{t1}, except for one subcase which requires Theorem \ref{t2}.
\begin{Lem}\label{hall} Suppose that $G$ is a finite group. Let $A=\{a_1,\ldots,a_n\}$ and $B=\{b_1,\ldots,b_m\}$ be two non-empty subsets of $G$ with $n+m-1\leq p(G)$. Then there exist $1\leq i_2,\ldots,i_n\leq m$ such that
$$
a_1+b_1,\ldots,a_1+b_m,a_2+b_{i_2},\ldots,a_{n}+b_{i_n}
$$
are all distinct.
\end{Lem}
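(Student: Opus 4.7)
The plan is to reformulate the lemma as a system-of-distinct-representatives (SDR) problem and apply Hall's marriage theorem, with the Cauchy--Davenport-type inequality \eqref{cdg} supplying the Hall condition. Since $a_1+b_1,\ldots,a_1+b_m$ are automatically distinct (as $B$ is a set), the task is to choose $b_{i_2},\ldots,b_{i_n}\in B$ so that each $a_j+b_{i_j}$ avoids $a_1+B$ and the values $a_2+b_{i_2},\ldots,a_n+b_{i_n}$ are pairwise distinct. Equivalently, one must produce an SDR for the family
$$\bigl\{(a_j+B)\setminus(a_1+B):\ j=2,\ldots,n\bigr\}.$$

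First I would set up the bipartite graph $\Gamma$ with left vertex set $\{a_2,\ldots,a_n\}$, right vertex set $G\setminus(a_1+B)$, and edges $a_j\sim g$ iff $g\in(a_j+B)\setminus(a_1+B)$. A matching saturating the left side of $\Gamma$ corresponds precisely to a valid choice of $i_2,\ldots,i_n$.

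Next I would verify Hall's condition. For any $S\subseteq\{a_2,\ldots,a_n\}$, write $S'=S\cup\{a_1\}$; then
$$|N(S)|=\Big|\Big(\bigcup_{a\in S'}(a+B)\Big)\setminus(a_1+B)\Big|=|S'+B|-m.$$
Applying \eqref{cdg} to $S'$ and $B$ gives
$$|S'+B|\geq\min\{p(G),\,|S'|+|B|-1\}=\min\{p(G),\,|S|+m\}.$$
The hypothesis $n+m-1\leq p(G)$ together with $|S|\leq n-1$ forces $|S|+m\leq p(G)$, so the minimum equals $|S|+m$. Hence $|N(S)|\geq|S|$, Hall's condition holds, and the required SDR exists.

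The substantive content is entirely carried by \eqref{cdg}; the rest is routine. The only delicate point is that the hypothesis $n+m-1\leq p(G)$ is used in full strength exactly at $|S|=n-1$, where it is needed to keep the Cauchy--Davenport bound on the linear side of its transition. I do not expect any real obstacle beyond this verification.
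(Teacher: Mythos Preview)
Your proof is correct and is essentially identical to the paper's: both define the sets $X_j=(a_j+B)\setminus(a_1+B)$ for $j=2,\ldots,n$, verify Hall's condition via \eqref{cdg} applied to $(\{a_1\}\cup S)+B$, and then invoke Hall's marriage theorem to extract the required system of distinct representatives. The only difference is presentational (you phrase it as a bipartite matching, the paper as an SDR for the family $\{X_j\}$).
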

\begin{proof} For $2\leq j\leq n$, let
$$
X_j=(\{a_1,a_j\}+B)\setminus(a_1+B).
$$
In view of (\ref{cdg}), for any non-empty $J\subseteq\{2,\ldots,n\}$,
$$
\bigg|\bigcup_{j\in J}X_j\bigg|=|((\{a_1\}\cup\{a_j\}_{j\in J})+B)\setminus(a_1+B)|\geq|J|.
$$
Applying the Hall marriage theorem (cf. \cite[Theorem 5.1]{VW}), we may choose distinct $c_2\in X_2,\ c_3\in X_3,\ \ldots, c_n\in X_n$. Letting $b_{i_j}=c_j-a_j$, we are done.
\end{proof}

Now suppose that $G$ is not a group of prime order.
Let $p=p(G)$. Without loss of generality, assume that $|A|+|B|-2\leq p$. In fact, if $|A|+|B|-2>p$, then we may choose non-empty $A'\subseteq A$ and $B'\subseteq B$ such that $|A'|+|B'|-2=p$. 
Clearly $A'\dotplus B'\subseteq A\dotplus B$. If $p=2$, then it is easy to check directly that $|A\dotplus B|\geq|A|+|B|-2$ provided $A\neq B$ and $|A|+|B|\leq 4$.
So we only need to consider those odd $p$.

If $G$ is abelian, let $H$ be a subgroup of $G$ such that $[G:H]=p$.
Otherwise, let $H$ be the center of $G$. Since $G$ is nilpotent, $G/H$ is also a non-trivial nilpotent group.
Below we assume that Theorem \ref{t1} holds for $H$ and $G/H$. 

For conveniece, let $\bar{a}$ denote the coset $a+H$.
Suppose that
$$
A=\bigcup_{j=1}^n(a_j+S_j),\qquad B=\bigcup_{j=1}^m(b_j+T_j),
$$
where $S_j,T_j$ are non-empty subsets of $H$ and $\bar{a}_i\neq \bar{a}_j$, $\bar{b}_i\neq \bar{b}_j$ for any $i\neq j$.
Furthermore, we may assumse that $\bar{a}_i=\bar{b}_j$ implies $a_i=b_j$.
Since either $G$ is abelian or $H$ is the center of $G$, we have $S+b=b+S$ for any $b\in G$ and $S\subseteq H$. Therefore
$$
A\dotplus B=\bigg(\bigcup_{\substack{1\leq i\leq n\\ 1\leq j\leq m\\ a_i=b_j}}(a_i+b_j+(S_i\dotplus T_j))\bigg)\cup\bigg(\bigcup_{\substack{1\leq i\leq n\\ 1\leq j\leq m\\ a_i\neq b_j}}(a_i+b_j+(S_i+T_j))\bigg).
$$
Let $\bar{A}=\{\bar{a}_1,\ldots,\bar{a}_n\}$ and $\bar{B}=\{\bar{b}_1,\ldots,\bar{b}_m\}$. It trivially follows that
\begin{equation}\label{AdB}
|A\dotplus B|\geq |\bar{A}\dotplus \bar{B}|-1+\max_{\substack{1\leq i\leq n\\ 1\leq j\leq m \\ a_i\neq b_j}}|S_i+T_j|.
\end{equation}

For $S\subseteq H$ and $a\in G$, clearly
$$
-(a+S)=(-S)+(-a)=(-a)+(-S).$$
So we can ``exchange'' $A$ and $B$ in the sense
$$
|A\dotplus B|=|-(A\dotplus B)|=|(-B)\dotplus(-A)|.
$$
That is, we may assume $m\geq n$. Furthermore, when $m=n$, assume that
$$
\max\{|S_1|,\ldots,|S_n|\}\geq\max\{|T_1|,\ldots,|T_m|\}.
$$

If $n+m-1>p$, then $|\bar{A}|+|\bar{B}|=|A|+|B|=p+2$ by recalling $|A|+|B|-2\leq p$.
Since $p$ is odd, we must have $\bar{A}\not=\bar{B}$.
In view of (\ref{AdB}), we get
$$
|A\dotplus B|\geq|\bar{A}\dotplus \bar{B}|\geq |\bar{A}|+|\bar{B}|-2.
$$

Below we always assume that $n+m-1\leq p$.
Suppose that
$|S_1|\geq|S_2|\geq\cdots |S_n|$.

\medskip\smallskip\noindent(i) $m>n$ and $|S_1|\geq 2$, or $m=n$ and $|S_1|\geq 3$.\medskip\smallskip

Applying Lemma \ref{hall} for $\bar{A}+\bar{B}$,  we know there exist
$1\leq i_2,\ldots,i_n\leq m$ such that
$$
\bar{a}_1+\bar{b}_1,\ldots,\bar{a}_1+\bar{b}_m,\bar{a}_2+\bar{b}_{i_2},\ldots,\bar{a}_{n}+\bar{b}_{i_n}.
$$
are distinct elements of $\bar{A}+\bar{B}$. Without loss of generality, assume that $a_1\not\in\{b_2,\ldots,b_m\}$. Then
\begin{align}\label{ABg}
|A\dotplus B|\geq
&\bigg|\bigcup_{j=1}^m((a_1+S_1)\dotplus(b_j+T_j))\cup
\bigcup_{j=2}^n((a_j+S_j)\dotplus(b_{i_j}+T_{i_j}))\bigg|\notag\\
\geq&|a_1+b_1+(S_1\dotplus T_1)|+\sum_{j=2}^m|a_1+b_j+(S_1+T_j)|+
\sum_{j=2}^n|a_j+b_{i_j}+(S_j\dotplus T_{i_j})|\notag\\
\geq&|S_1|+|T_1|-3+\sum_{j=2}^m(|S_1|+|T_j|-1)+
\sum_{j=2}^n|S_j\dotplus T_{i_j}|\notag\\
\geq&|B|-2+m(|S_1|-1)+
\sum_{j=2}^n|S_j\dotplus T_{i_j}|.
\end{align}
It is easy to see that
$|S\dotplus T|\geq |S|-1$. Hence for $2\leq j\leq n$,
$$
|S_1|-1+|S_j\dotplus T_{i_j}|\geq |S_1|+|S_j|-2\geq |S_j|.
$$
If $m>n$, then
$$
|A\dotplus B|\geq |B|-3+(m-n)+|S_1|+\sum_{j=2}^n(|S_1|-1+|S_j\dotplus T_{i_j}|)\geq |A|+|B|-2.
$$
If $m=n=1$, then we have $S_1\neq T_1$ and
$$
|A\dotplus B|=|S_1\dotplus T_1|\geq |S_1|+|T_1|-2=|A|+|B|-2.
$$
Suppose that $m=n\geq 2$. Then since $|S_1|\geq 3$,
$$
|S_1|-1+|S_j\dotplus T_{i_j}|\geq |S_j|+1.
$$
In view of (\ref{ABg}),
$$
|A\dotplus B|\geq |B|-3+|S_1|+\sum_{j=2}^n(|S_j|+1)\geq |A|+|B|-2.
$$

\medskip\smallskip\noindent(ii) $m>n$ and $|S_1|=1$.
\medskip\smallskip

Clearly now $\bar{A}\neq \bar{B}$. Hence $|\bar{A}\dotplus \bar{B}|\geq n+m-2$. We need to consider three cases.

\medskip\smallskip(1) Suppose that $\bar{A}\not\subseteq \bar{B}$. In particular, we may assume that $\bar{a}_1\not\in \bar{B}$. Then for some $2\leq i_2,\ldots,i_{n-1}\leq n$ and $1\leq k_2,\ldots,k_{n-1}\leq m$,
$$
\bar{a}_1+\bar{b}_1,\bar{a}_1+\bar{b}_2,\ldots,\bar{a}_1+\bar{b}_m,\bar{a}_{i_2}+\bar{b}_{k_2},\ldots,\bar{a}_{i_{n-1}}+\bar{b}_{k_{n-1}}
$$
are distinct elements of $\bar{A}\dotplus \bar{B}$, where $\bar{a}_{i_j}\neq \bar{b}_{k_j}$ for $2\leq j\leq n-1$. Thus
\begin{align*}
|A\dotplus B|\geq\sum_{j=1}^{m}|S_1+T_j|
+\sum_{j=2}^{n-1}|S_{i_j}+ T_{k_j}|\geq\sum_{j=1}^{m}|T_j|
+n-2=|A|+|B|-2.
\end{align*}

\medskip\smallskip(2) Suppose that $\bar{A}\subseteq \bar{B}$ and $\bar{a}_j+\bar{a}_j\in \bar{A}\dotplus \bar{B}$ for all $1\leq j\leq n$. Without loss of generality, assume that $a_1=b_1$.
Clearly now $\bar{A}\dotplus \bar{B}=\bar{A}+\bar{B}$, i.e., $|\bar{A}\dotplus \bar{B}|\geq n+m-1$. Hence we have
$$\bar{a}_1+\bar{b}_1,\bar{a}_1+\bar{b}_2,\ldots,\bar{a}_1+\bar{b}_m,\bar{a}_{i_2}+\bar{b}_{k_2},\ldots,\bar{a}_{i_{n}}+\bar{b}_{k_{n}}
$$
are distinct elements of $\bar{A}\dotplus \bar{B}$, where $\bar{a}_{i_j}\neq \bar{b}_{k_j}$. It follows that
\begin{align*}
|A\dotplus B|\geq&|S_1\dotplus T_1|+\sum_{j=2}^{m}|S_1+T_j|
+\sum_{j=2}^{n}|S_{i_j}+T_{k_j}|\\
\geq&(|T_1|-1)+\sum_{j=2}^{m}|T_j|
+n-1=|A|+|B|-2.
\end{align*}

\medskip\smallskip(3) Suppose that $\bar{A}\subseteq \bar{B}$ but $\bar{a}_{i_0}+\bar{a}_{i_0}\not\in \bar{A}\dotplus \bar{B}$ for some $1\leq i_0\leq n$. 
Without loss of generality, assume that $\bar{a}_1+\bar{a}_1\not\in\bar{A}+\bar{B}$ and $a_1=b_1$.
Since $|\bar{A}\dotplus \bar{B}|\geq n+m-2$, we may assume that
$$
\bar{a}_{1}+\bar{b}_2,\ldots,\bar{a}_{1}+\bar{b}_m,\ \bar{a}_{i_2}+\bar{b}_{k_2},\ldots,\bar{a}_{i_{n}}+\bar{b}_{k_{n}}
$$
are distinct elements of $\bar{A}\dotplus \bar{B}$. We still have
\begin{align*}
|A\dotplus B|\geq&|S_1\dotplus T_1|+\sum_{j=2}^{m}|S_1+T_j|
+\sum_{j=2}^{n}|S_{i_j}+T_{k_j}|\geq|A|+|B|-2.
\end{align*}

\medskip\smallskip\noindent(iii) $m=n$ and $|S_1|=2$.
\medskip\smallskip

The case $m=n=1$ is trivial. Suppose that $m=n\geq 2$.
If $\bar{a}_1\not\in \bar{B}$, then following the same discussion in the first case of (ii), we can get
$|A\dotplus B|\geq |A|+|B|-2$.

Suppose that $\bar{a}_1\in \bar{B}$ and $a_1=b_1$. In view of Lemma \ref{hall}, we may assume that
$$
\bar{a}_1+\bar{b}_1,\bar{a}_1+\bar{b}_2,\ldots,\bar{a}_1+\bar{b}_n,\bar{a}_{2}+\bar{b}_{i_2},\ldots,\bar{a}_{n}+\bar{b}_{i_n}
$$
are distinct elements of $\bar{A}+\bar{B}$. Suppose that $S_1\neq T_1$. Then
\begin{align*}
|A\dotplus B|\geq&|S_1\dotplus T_1|+\sum_{j=2}^{n}|S_1+T_j|
+\sum_{j=2}^{n}|S_{j}\dotplus T_{i_j}|\\
\geq&|S_1|+|T_1|-2+\sum_{j=2}^{n}(|S_1|+|T_j|-1)
+\sum_{j=2}^{n}(|S_{j}|-1)\\
=&|A|+|B|+(n-1)|S_1|-n-n=|A|+|B|-2.
\end{align*}
Now suppose that $S_1=T_1$. Since $A\neq B$, there exists some $2\leq j_0\leq n$ such that either $a_{j_0}\neq b_{i_{j_0}}$ or $S_{j_0}\neq T_{i_{j_0}}$

\medskip\smallskip(1) Suppose that $a_{j_0}\neq b_{i_{j_0}}$ for some $2\leq j_0\leq n$. Then
\begin{align*}
&|A\dotplus B|\geq|S_1\dotplus T_1|+\sum_{j=2}^{n}|S_1+T_j|
+|S_{j_0}+T_{i_{j_0}}|+\sum_{\substack{2\leq j\leq n\\ j\neq j_0}}|S_{j}\dotplus T_{i_j}|\\
\geq&n|S_1|+|B|-(n+2)+|S_{j_0}|+|T_{i_{j_0}}|-1+\sum_{\substack{2\leq j\leq n\\ j\neq j_0}}(|S_{j}|-1)\geq |A|+|B|-2.
\end{align*}

\medskip\smallskip(2) Suppose that $a_j=b_{i_j}$ for all $2\leq j\leq n$ and $S_{j_0}\neq T_{i_{j_0}}$ for some $2\leq j_0\leq n$.
If $|S_{j_0}|=2$, then we may exchange $a_{1}$ and $a_{j_0}$. Thus the desired result follows from our discussion on the case $S_1\neq T_1$.
Assume that $|S_{j_0}|=1$. Since $S_{j_0}\neq T_{i_{j_0}}$, $S_{j_0}\dotplus T_{i_{j_0}}$ is non-empty.
Then
\begin{align*}
&|A\dotplus B|\geq|S_1\dotplus T_1|+\sum_{j=2}^{n}|S_1+T_j|
+|S_{j_0}\dotplus T_{i_{j_0}}|+\sum_{\substack{2\leq j\leq n\\ j\neq j_0}}|S_{j}\dotplus T_{i_j}|\\
\geq&n|S_1|+|B|-(n+2)+|S_{j_0}|+\sum_{\substack{2\leq j\leq n\\ j\neq j_0}}(|S_{j}|-1)\geq |A|+|B|-2.
\end{align*}

\medskip\smallskip\noindent(iv) $m=n$ and $|S_1|=1$.
\medskip\smallskip

Recalling (\ref{AdB}), we have
$$
|A\dotplus B|\geq |\bar{A}\dotplus \bar{B}|\geq|A|+|B|-2,
$$
if $\bar{A}\neq \bar{B}$ or $\bar{A}\dotplus \bar{B}=\bar{A}+\bar{B}$.
So we may assume that $a_j=b_j$  for $1\leq j\leq n$ and $\bar{a}_1+\bar{a}_1\not\in \bar{A}\dotplus \bar{B}$. If $S_1\not=T_1$, then
$$
|A\dotplus B|\geq |S_1\dotplus T_1|+|\bar{A}\dotplus \bar{B}|\geq 1+(|\bar{A}|+|\bar{B}|-3)=|A|+|B|-2.
$$

However, the final case $S_1=T_1$ is most annoying. In fact, its proof needs Theorem \ref{t2} and Corollary \ref{c1}.
So we shall firstly prove Theorem \ref{t2} in Section 3, before completing the proof of Theorem \ref{t1}.

\section{Proof of Theorem \ref{t2} for finite nilpotent groups}
\setcounter{equation}{0}\setcounter{Thm}{0}\setcounter{Lem}{0}\setcounter{Cor}{0}

In this section, we shall only prove Theorem \ref{t2} for finite nilpotent groups, which is sufficient to complete the proof of Theorem \ref{t1}.

Suppose that $G$ is a finite non-abelian nilpotent group and $H$ is the center of $G$. Assume that Theorem \ref{t2} holds for $G/H$.
Let $A$ be a non-empty subset of $G$ satisfying
$$
|A\dotplus A|=2|A|-3.
$$
We shall prove that $A$ is commutative.
Assume that
$$
A=\bigcup_{i=1}^n(a_i+S_i),
$$
where $\emptyset\neq S_i\subseteq H$ and $\bar{a}_i\not=\bar{a}_j$ for $i\neq j$.
There is nothing to do when $n=1$. Below assume that $n\geq 2$. Furthermore, if $p(G)=2$ and $A=\{a_1,a_2\}$, then $|A\dotplus A|=1$ if and only if $a_1+a_2=a_2+a1$.
So we may assume that $p(G)$ is odd.

\medskip\smallskip Suppose that $|S_1|\geq|S_2|\geq\cdots\geq|S_m|$. By (\ref{ABg}), it is impossible that $|S_1|\geq 3$.
Assume that $|S_1|=2$ and
$$
\bar{a}_1+\bar{a}_1,\bar{a}_1+\bar{a}_2,\ldots,\bar{a}_1+\bar{a}_n,\bar{a}_{2}+\bar{a}_{i_2},\ldots,\bar{a}_{n}+\bar{a}_{i_n}
$$
are distinct elements of $\bar{A}+\bar{A}$. If $a_{j_0}\neq a_{i_{j_0}}$ for some $2\leq j_0\leq n$, then
\begin{align*}
&|A\dotplus A|\geq|S_1\dotplus S_1|+\sum_{j=2}^{n}|S_1+S_j|
+|S_{j_0}+S_{i_{j_0}}|+\sum_{\substack{2\leq j\leq n\\ j\neq j_0}}|S_{j}\dotplus S_{i_j}|\geq 2|A|-2.
\end{align*}
So we must have $j=i_j$ for all $2\leq j\leq n$.
Now
\begin{equation}\label{subset}
(a_1+a_1+(S_1\dotplus S_1))\cup\bigcup_{j=2}^n(a_1+a_j+(S_1+S_j))\cup\bigcup_{j=2}^n(a_j+a_j+(S_j\dotplus S_j))
\end{equation}
contains at least
\begin{equation*}
|S_1\dotplus S_1|+\sum_{j=1}^n|S_1+S_j|+\sum_{j=2}^n|S_j\dotplus S_j|\geq
\sum_{j=1}^n(|S_1|+|S_j|-1)-2+\sum_{j=2}^n(|S_j|-1)\geq 2|A|-3
\end{equation*}
elements. That is, the set (\ref{subset}) shloud concide with $A\dotplus A$.
If there exists an element of $A\dotplus A$ not lying in (\ref{subset}), then we get a contradiction.\medskip\smallskip

Assume that there exist distinct $1\leq j_1,j_2\leq n$ satisfying
$$\bar{a}_{j_1}+\bar{a}_{j_2}\not\in\{\bar{a}_1+\bar{a}_1,\bar{a}_1+\bar{a}_2,\cdots,\bar{a}_1+\bar{a}_n,\bar{a}_2+\bar{a}_2,\cdots,\bar{a}_n+\bar{a}_n\}.$$
Then $a_{j_1}+a_{j_2}+(S_{j_1}+S_{j_2})$ is a non-empty subset of $A\dotplus A$. But it is evidently not included in (\ref{subset}). Therefore we may assume that
$$
\bar{A}\dotplus \bar{A}\subseteq\{\bar{a}_1+\bar{a}_1,\bar{a}_1+\bar{a}_2,\cdots,\bar{a}_1+\bar{a}_n,\bar{a}_2+\bar{a}_2,\cdots,\bar{a}_n+\bar{a}_n\}.
$$
Let
$$
J_1=\{1\leq j\leq n:\, \bar{a}_j+\bar{a}_j\in (\bar{A}\dotplus \bar{A})\setminus\{\bar{a}_1+\bar{a}_2,\cdots,\bar{a}_1+\bar{a}_n\}\},
$$
and let
$J_2=\{2,3,\ldots,n\}\setminus J_1$.

\medskip\smallskip(a) Suppose that there exists some $j_0\in J_1$ satisfying $|S_{j_0}|=1$. Since $\bar{a}_{j_0}+\bar{a}_{j_0}\in \bar{A}\dotplus \bar{A}$, we can find distinct $2\leq j_1,j_2\leq m$ such that $\bar{a}_{j_1}+\bar{a}_{j_2}=\bar{a}_{j_0}+\bar{a}_{j_0}$.
But now $S_{j_0}\dotplus S_{j_0}=\emptyset$ and $S_{j_1}+S_{j_2}\neq\emptyset$. Hence $a_{j_1}+a_{j_2}+(S_{j_1}+S_{j_2})\subseteq A\dotplus A$ is not included in (\ref{subset}).

\medskip\smallskip(b) Assume that $|S_j|=2$ for each $j\in J_1$.
Note that 
$
|J_1|\geq|\bar{A}\dotplus\bar{A}|-(n-1)$.
If $|\bar{A}\dotplus\bar{A}|>2n-3$, then $|J_1|\geq n-1$ and $|J_2|\leq 1$. And if 
$|\bar{A}\dotplus\bar{A}|=2n-3$, then by the induction hypothesis, $|J_1|=n-2$, $|J_2|\leq 2$ and $\bar{A}$ is commutative.
We may always find
$j_0\in J_1$ and $2\leq j_1,j_2\leq n$ such that $\bar{a}_{j_1}+\bar{a}_{j_2}=\bar{a}_{j_0}+\bar{a}_{j_0}$ and $\max\{|S_{j_1}|,|S_{j_2}||\}=2$ in the case $n\geq 4$. Since
$|S_{j_0}\dotplus S_{j_0}|=1$ and $|S_{j_1}+S_{j_2}|\geq 2$, we have $a_{j_1}+a_{j_2}+(S_{j_1}+S_{j_2})$ is not a subset of (\ref{subset}).

\medskip\smallskip Thus combining (a) and (b), we get that $|S_1|=2$ is impossible when $n\geq 4$.

\medskip\smallskip Now consider the case $n=3$. Suppose that
$|\bar{A}\dotplus \bar{A}|\geq 4$, i.e.,
$$
\bar{A}\dotplus \bar{A}\supseteq\{\bar{a}_1+\bar{a}_2,\bar{a}_1+\bar{a}_3,\bar{a}_{j_1}+\bar{a}_{k_1},\bar{a}_{j_2}+\bar{a}_{k_2}\}
$$
where $j_1\neq k_1$, $j_2\neq k_2$. Then
$$
|A\dotplus A|\geq(|S_1|+|S_2|-1)+(|S_1|+|S_3|-1)+2((|S_2|+|S_3|-1))\geq2|A|-2,
$$
since $|S_1|\geq|S_2|\geq|S_3|$. So we must have $|\bar{A}\dotplus \bar{A}|=3$. By the induction hypothesis, $\bar{A}$ is commutative, i.e., $\bar{a}_i+\bar{a}_j=\bar{a}_j+\bar{a}_i$
for $1\leq i\leq j\leq 3$. Hence
$$
\bar{A}\dotplus \bar{A}=\{\bar{a}_1+\bar{a}_2,\bar{a}_1+\bar{a}_3,\bar{a}_{2}+\bar{a}_{3}\}.
$$
Below we shall show that $\{a_1,a_2,a_3\}$ is actually commutative in $G$, which evidently implies $A$ is also commutative.
Assume that $a_2+a_1=a_1+a_2+h$ where $h\in H$. Note that
$$
(a_1+a_2+(S_1+S_2))\cup(a_1+a_3+(S_1+S_3))\cup(a_2+a_3+(S_2+S_3))
$$
contains exactly
\begin{equation}\label{set2}
(|S_1|+|S_2|-1)+(|S_1|+|S_3|-1)+(|S_2|+|S_3|-1)=2|A|-3
\end{equation}
elements. So we must have
$$a_1+a_2+(S_1+S_2)=a_2+a_1+(S_2+S_1)=a_1+a_2+h+(S_1+S_2),$$
i.e.,
$h+(S_1+S_2)=S_1+S_2$. Hence $S_1+S_2$ includes a coset of the subgroup generated by $h$.
However, since $|S_1+S_2|<p(G)$, this is impossible unless $h=0$.
Similarly, we can get $a_1+a_3=a_3+a_1$ and $a_2+a_3=a_3+a_2$.

The case $n=2$ is similar. In fact, $|\bar{A}+\bar{A}|=2|\bar{A}|-3$ implies that $\bar{A}$ is commutative. And from $a_2+a_1+(S_2+S_1)=a_1+a_2+(S_1+S_2)$,
we can deduce that $a_1+a_2=a_2+a_1$.

\medskip\smallskip Finally, suppose that $|S_1|=\cdots=|S_n|=1$. From $|A\dotplus A|=2|A|-3$, it follows that
$|\bar{A}\dotplus \bar{A}|=2|\bar{A}|-3$. By the induction hypothesis, $\bar{A}$ is commutative. If $a_i+a_j\neq a_j+a_i$ for some $i\neq j$, then by the above discussion, we know
$$
a_i+a_j+(S_i+S_j)\neq a_j+a_i+(S_i+S_j),
$$
i.e., the coset $a_i+a_j+H$ contains two elements of $A\dotplus A$.
Hence
$$
|A\dotplus A|\geq |\bar{A}\dotplus \bar{A}|+1=2|A|-2.
$$
This leads a contradiction. Thus $\{a_1,a_2,\ldots,a_n\}$ is commutative, as well as $A$.

The proof of Theorem \ref{t2} for finite nilpotent groups is concluded. \qed

\medskip\smallskip Let us return the proof of the final case of Theorem \ref{t1}. Suppose that
$$
A=\bigcup_{j=1}^n(a_j+S_j),\qquad B=\bigcup_{j=1}^n(a_j+T_j),
$$
where $|S_1|=\cdots=|S_n|=|T_1|=\cdots=|T_n|=1$ and $S_j=T_j$ if $\bar{a}_j+\bar{a}_j\not\in\bar{A}\dotplus\bar{B}$.
We need to show that $|A\dotplus B|\geq |A|+|B|-2$ if $S_{j_0}\neq T_{j_0}$ for some $1\leq j_0\leq n$.

Assume on the contrary that $|A\dotplus B|=|A|+|B|-3$. Then $|\bar{A}\dotplus\bar{B}|=|\bar{A}\dotplus\bar{A}|=2|\bar{A}|-3$. If $n\geq 5$, then by Corollary \ref{c1}, $\bar{A}$ is an arithmatic
progression. Suppose that $n=4$, i.e., $\bar{A}=\{\bar{a}_1,\bar{a}_2,\bar{a}_3,\bar{a}_4\}$. Clearly
$$
\bar{A}\dotplus\bar{A}\subseteq\{\bar{a}_1+\bar{a}_2,\bar{a}_1+\bar{a}_3,\bar{a}_1+\bar{a}_4,\bar{a}_2+\bar{a}_3,\bar{a}_2+\bar{a}_4,\bar{a}_3+\bar{a}_4\}.
$$
Noting that $|\bar{A}\dotplus\bar{A}|=5$, we may assume that $\bar{a}_1+\bar{a}_4=\bar{a}_2+\bar{a}_3$. Since $S_{j_0}\neq T_{j_0}$ for some $1\leq j_0\leq n$, we have
$\bar{a}_{j_0}+\bar{a}_{j_0}\in\bar{A}\dotplus\bar{A}$, i.e., there exist $1\leq j_1<j_2\leq 4$ such that $\bar{a}_{j_0}+\bar{a}_{j_0}=\bar{a}_{j_1}+\bar{a}_{j_2}$. Hence $\{\bar{a}_{j_1},\bar{a}_{j_0},\bar{a}_{j_2}\}$ forms an
arithmatic progression, as well as $\bar{A}$. Similarly, when $n=3$, we also can get $\bar{A}$ is an arithmatic progression.

Now we have proved that $\bar{A}=\{\bar{a},\bar{a}+\bar{d},\cdots,\bar{a}+(n-1)\bar{d}\}$. Since $\bar{A}$ is commutative, $\bar{a}+\bar{d}=\bar{d}+\bar{a}$. Suppose that
$d+a=a+d+h$ where $h\in H$. Without loss of generality, assume that $a_i=a+(i-1)d$. Clearly now
$$
\bar{A}+\bar{A}=\{\bar{a}+\bar{a}+\bar{d},\bar{a}+\bar{a}+2\bar{d},\cdots,\bar{a}+\bar{a}+(2n-3)\bar{d}\}.
$$
Since $2n-3<p(G)$, we have $\bar{a}+\bar{a}\not\in\bar{A}+\bar{A}$.
It follows from our assumption that $S_1=T_1$.
Suppose that $S_j=\{s_j\}$ and $T_j=\{t_j\}$ for $1\leq j\leq n$. Then
\begin{align*}
&a_j+a_1+(s_j+t_1)=a+(j-1)d+a+(s_j+t_1)\\
=&a+a+(j-1)d+(j-1)h+(s_j+t_1)\\
=&a_1+a_j+(s_1+t_j)=a+a+(j-1)d+(s_1+t_j).
\end{align*}
Hence $(j-1)h+(s_j+t_1)=s_1+t_j$ for $2\leq j\leq n$. Since $s_1=t_1$ and $s_{j_0}\neq t_{j_0}$ for some $1\leq j_0\leq n$, we must have $h\neq 0$. Thus
$s_j\neq t_j$ for each $2\leq j\leq n$. On the other hand, since $|\bar{A}\dotplus\bar{A}|=2n-3\leq |\bar{A}+\bar{A}|-2$, 
there exists $2\leq j_1\leq n$ such that $\bar{a}_{j_1}+\bar{a}_{j_1}\not\in\bar{A}\dotplus\bar{A}$. 
By our assumption, we should have $s_{j_1}=t_{j_1}$, which leads an evident contradiction.
All are done. \qed

\section{Proof of Theorem \ref{t2}: Generalized restricted sumsets}
\setcounter{equation}{0}\setcounter{Thm}{0}\setcounter{Lem}{0}\setcounter{Cor}{0}

In the next two sections, we shall complete the proof of Theorem \ref{t2} for general finite groups. Let $\Aut G$ denote the automorphism group of $G$.
For $\sigma\in\Aut G$ and $A,B\subseteq G$, define
$$
A\stackrel{\sigma}+B=\{a+b:\,a\in A,\ b\in B,\ a\neq\sigma(b)\}.
$$
In \cite{BW}, Balister and Wheeler proved
\begin{equation}\label{sigmaeh}
|A\stackrel{\sigma}+B|\geq\min\{p(G)-\delta,|A|+|B|-3\},
\end{equation}
where $\delta=1$ or $0$ according to whether the order of $\sigma$ is even or not.
For $A\subseteq G$, define
$$
\sigma(A)=\{\sigma(a):\,a\in A\}.
$$
Here we shall prove a generalizaton of Theorem \ref{t2}.
\begin{Thm}\label{t3}
Suppose that $G$ is a finite group and $A$ is a non-empty subsets of $G$. 
Let $\sigma$ be an automorphism of $G$ with odd order.
If $2|A|-3<p(G)$ and
$$
|\sigma(A)\stackrel{\sigma}+A|=2|A|-3,
$$
then $A$ is $\sigma$-commutative, i.e.,
$$
\sigma(a_1)+a_2=\sigma(a_2)+a_1.
$$
for any $a_1,a_2\in A$.
\end{Thm}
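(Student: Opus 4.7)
The plan is to induct on $n=|A|$, running parallel to Section~3 but without invoking a central quotient; instead, we use direct removal of a single element and the Balister--Wheeler inequality (\ref{sigmaeh}) in the role played there by the induction on $G/H$. The case $n=1$ is vacuous since $\sigma(A)\stackrel{\sigma}{+}A$ is empty, and for $n=2$ the hypothesis $|\sigma(A)\stackrel{\sigma}{+}A|=1$ immediately forces $\sigma(a_1)+a_2=\sigma(a_2)+a_1$, which is exactly $\sigma$-commutativity. So assume the result for all sets of size less than $n\geq 3$.

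Fix any $a\in A$ and set $A_a=A\setminus\{a\}$. The natural decomposition
$$
\sigma(A)\stackrel{\sigma}{+}A=\bigl(\sigma(A_a)\stackrel{\sigma}{+}A_a\bigr)\cup\bigl(\sigma(a)+A_a\bigr)\cup\bigl(\sigma(A_a)+a\bigr)
$$
has the last two pieces of exact size $n-1$ (bijectivity of $\sigma$ plus left/right translation), and (\ref{sigmaeh}), applicable because $\sigma$ has odd order and $2n-5<p(G)$, yields $|\sigma(A_a)\stackrel{\sigma}{+}A_a|\geq 2n-5$. Consequently at most two elements of $(\sigma(a)+A_a)\cup(\sigma(A_a)+a)$ lie outside $\sigma(A_a)\stackrel{\sigma}{+}A_a$; equivalently, at least $2n-4$ of the $2(n-1)$ candidates $\sigma(a)+b$, $\sigma(b)+a$ ($b\in A_a$) must collide with entries already present. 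When some $a$ realises $|\sigma(A_a)\stackrel{\sigma}{+}A_a|=2n-5$, the inductive hypothesis makes $A_a$ $\sigma$-commutative, so the entries $\sigma(a_i)+a_j$ indexed by $A_a$ pair up symmetrically via $\sigma(a_i)+a_j=\sigma(a_j)+a_i$. Tracking which of $\sigma(a)+b$, $\sigma(b)+a$ can meet such a paired entry, together with the bijectivity of $\sigma$ and the bound $2n-3<p(G)$ ruling out absorption of collisions into a small cyclic subgroup, should force $\sigma(a)+b=\sigma(b)+a$ for every $b\in A_a$ and hence $\sigma$-commutativity of $A$.

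The main obstacle is the complementary case $|\sigma(A_a)\stackrel{\sigma}{+}A_a|\geq 2n-4$ for \emph{every} $a\in A$, where the inductive hypothesis is inaccessible. Here the strategy is a global double-count on the array $M_{ij}=\sigma(a_i)+a_j$ ($i\neq j$): an element $x\in\sigma(A)\stackrel{\sigma}{+}A$ is removed from $\sigma(A_a)\stackrel{\sigma}{+}A_a$ precisely when every pair $(a_i,a_j)$ with $\sigma(a_i)+a_j=x$ uses $a$. Summing $|\sigma(A_a)\stackrel{\sigma}{+}A_a|$ over $a\in A$ and comparing with $n(2n-3)$ yields a rigid constraint on the fiber multiplicities of $M$; combining this with the odd-order hypothesis on $\sigma$ (which excludes involutive coincidence patterns that would allow two pairs to share a single removed element symmetrically) and $2n-3<p(G)$, one should either force $A$ to be $\sigma$-commutative or contradict the hypothesis on $|\sigma(A)\stackrel{\sigma}{+}A|$. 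I expect this bookkeeping to constitute the bulk of the work and to require a fine case split on the exact value $|\sigma(A_a)\stackrel{\sigma}{+}A_a|\in\{2n-4,2n-3\}$ and on the fiber-size pattern of $M$, possibly refined further in a subsequent section devoted to finishing the extremal analysis.
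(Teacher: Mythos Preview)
Your strategy is genuinely different from the paper's, and as written it has a real gap rather than just missing routine details. The paper does \emph{not} induct on $|A|$; it inducts on the group. Since $2|A|-3<p(G)$ forces $|G|$ odd (after disposing of $p(G)=2$), Feit--Thompson gives solvability, and the Balister--Wheeler structural lemma produces a $\sigma$-invariant normal subgroup $H$ with $G/H$ isomorphic to the additive group of a finite field $\F_{p^\alpha}$, on which $\sigma$ acts as multiplication by some $\gamma$. The base case $H=\{0\}$ is then handled by the polynomial method (Lemma~\ref{field}): if $\gamma\neq 1$ one gets $|\sigma(A)\stackrel{\sigma}{+}A|\geq 2|A|-2$, so the hypothesis $=2|A|-3$ forces $\sigma=\mathrm{id}$ and the conclusion is trivial. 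The inductive step decomposes $A$ into $H$-cosets $a_j+S_j$, and a counting argument (parallel to your removal idea, but along cosets) forces either all $|S_j|=1$ (reducing to $G/H$) or $|S_1|=2$ with at most three cosets, which is finished by the explicit case analysis of Lemma~\ref{xy}. The odd-order hypothesis on $\sigma$ is used concretely there, via the iteration $\sigma^{2k}(x_1)=\sigma^{2k}(y)+\cdots+x_1-\cdots$ and the vanishing of the telescoping sum when $2k\equiv 1$ modulo the order of $\sigma$ times $|G|$.

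In your plan, neither branch is actually proved. In the ``good'' branch, knowing that $A_a$ is $\sigma$-commutative tells you $\sigma(A_a)\stackrel{\sigma}{+}A_a$ is symmetric in its two indices, but it does \emph{not} pin down where the $2(n-1)$ elements $\sigma(a)+b$, $\sigma(b)+a$ land: a collision $\sigma(a)+b=\sigma(a_i)+a_j$ with $b\notin\{a_i,a_j\}$ is perfectly consistent with the size count, and nothing you wrote excludes it. (Lemma~\ref{xy} in the paper shows just how delicate this exclusion is even when only four elements are in play, and that argument genuinely uses the odd order of $\sigma$ through an explicit iteration, not an abstract ``no involutive patterns'' principle.) In the ``bad'' branch you only announce a double count and a hoped-for contradiction; no computation is given, and it is not clear what replaces the paper's field base case, which is the point at which the hypothesis $2|A|-3<p(G)$ and the odd order of $\sigma$ acquire real force. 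As it stands the proposal is a plausible outline, but the two places marked ``should force'' and ``one should'' are exactly the substance of the theorem, and they are not supplied.
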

It is easy to verify Theorem \ref{t3} when $p(G)=2$. So below we always assume that $|G|$ is odd. From the well-known
Feit-Thompson theorem \cite{FT}, we know that $G$ is a solvable group.

For $a\in G$, define $\tau_a:G\to G$ by $\tau_a(x)=-a+x+a$ for any $x\in G$.  
Apparently $\tau_a\in\Aut G$. And $x\neq\sigma(y)$ if and only if $\tau_b(x)\neq\tau_b\sigma(y)$.
Let $\Inn G=\{\tau_a:\,a\in G\}$ be the inner automorphism group of $G$. 
We know that $\Inn G\cong G$ and $\Inn G\trianglelefteq \Aut G$. By the second isomorphism theorem,
$$
\langle\sigma\rangle \Inn G/\Inn G\cong \langle\sigma\rangle/(\langle\sigma\rangle\cap \Inn G),
$$
where $\langle\sigma\rangle$ is the subgroup generated by $\sigma$.
Hence if $\sigma$ is odd, then $\tau_a\sigma$ is also odd for any $a\in G$.

Suppose that $H$ is a normal subgroup of $G$ satisfying $\sigma(H)=H$. Then
for any coset $\bar{a}=a+H$, we have
$$
\sigma(\bar{a})=\sigma(a+H)=\sigma(a)+H.
$$
So $\sigma$ also can be viewed as an automorphism of $G/H$. The following lemma of Balister and Wheeler says that such $H$ always exists. 
For a prime power $p^\alpha$, let $\F_{p^\alpha}$ denote the finite field with $p^\alpha$ elements. 
\begin{Lem}[{\cite[Theorem 3.2]{BW}}]\label{bwl}
Suppose that $G$ is a finite solvable group and $\sigma$ is an automorphism of $G$. Then there exists
a proper normal subgroup $H$ of $G$ satisfying that

\noindent (i) $\sigma(H)=H$.

\noindent (ii) $G/H$ is isomorphic to the additive group of some finite field $\F_{p^\alpha}$.

\noindent (iii) Let $\chi$ denote the isomorphism from $G/H$ to the additive group of $\F_{p^\alpha}$.
Then there exists some $\gamma\in\F_{p^\alpha}\setminus\{0\}$ such that 
$\chi(\sigma(\bar{a}))=\gamma\cdot\chi(\bar{a})$ for each $\bar{a}\in G/H$.
\end{Lem}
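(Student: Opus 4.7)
The plan is to take $H$ to be a maximal element among the proper normal $\sigma$-invariant subgroups of $G$; such an $H$ exists because the trivial subgroup qualifies and $G$ is finite. Property (i) then holds by construction. The key observation is that the quotient $\bar G := G/H$ has no proper nonzero $\sigma$-invariant normal subgroup, for otherwise its preimage in $G$ would strictly contain $H$ and contradict maximality. The rest of the proof consists in exploiting this ``$\sigma$-simplicity'' together with the solvability of $\bar G$ to force $\bar G$ to be elementary abelian with $\sigma$ acting by a single scalar.

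First I would show that $\bar G$ is an elementary abelian $p$-group for some prime $p$. The derived subgroup $[\bar G,\bar G]$ is characteristic in $\bar G$, hence $\sigma$-invariant and normal; since $\bar G$ is solvable it is proper, so by $\sigma$-simplicity it is trivial and $\bar G$ is abelian. Next, for each prime $p$ the $p$-primary component $\bar G_p$ is characteristic, so by $\sigma$-simplicity exactly one such $\bar G_p$ equals $\bar G$, and $\bar G$ is a finite abelian $p$-group. Finally, the subgroup $p\bar G$ is characteristic and proper (since $\bar G$ is a nontrivial finite $p$-group), so $p\bar G=0$. Thus $\bar G$ acquires the structure of a nonzero $\F_p$-vector space of some dimension $\alpha$, on which $\sigma$ acts as an $\F_p$-linear automorphism.

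Now, since every subgroup of the abelian group $\bar G$ is normal, $\sigma$-simplicity translates to the statement that $\bar G$ has no proper nonzero $\sigma$-invariant $\F_p$-subspace; equivalently, $\bar G$ is a simple $\F_p[x]$-module with $x$ acting as $\sigma$. Every simple $\F_p[x]$-module is isomorphic to $\F_p[x]/(f(x))$ for some irreducible $f\in\F_p[x]$, and the natural $\F_p$-algebra isomorphism $\F_p[x]/(f(x))\cong \F_{p^\alpha}$ with $\alpha=\deg f$ sends the action of $x$ to multiplication by the image $\gamma$ of $x$ in $\F_{p^\alpha}$. Composing the two isomorphisms yields the additive isomorphism $\chi\colon \bar G\to \F_{p^\alpha}$ required by (ii), together with $\chi(\sigma(\bar a))=\gamma\cdot\chi(\bar a)$ as required by (iii); since $\sigma$ is invertible, $\gamma\neq 0$.

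The main obstacle, modest as it is, lies in passing from ``irreducible $\F_p$-linear action of $\sigma$ on $\bar G$'' to ``multiplication by a fixed scalar $\gamma\in\F_{p^\alpha}$''; one must choose the identification of $\bar G$ with $\F_{p^\alpha}$ so that the $\F_p$-linear action of $\sigma$ really corresponds to multiplication by a single field element rather than to some more general linear map. Once the $\F_p[x]$-module viewpoint is adopted this is automatic, and all other steps are routine applications of $\sigma$-simplicity to a sequence of standard characteristic subgroups (derived subgroup, primary components, and $p\bar G$).
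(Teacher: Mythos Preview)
Your argument is correct. The paper does not supply its own proof of this lemma: it is quoted verbatim as \cite[Theorem~3.2]{BW} and used as a black box, so there is no in-paper proof to compare against. For what it is worth, your approach---choosing $H$ maximal among proper $\sigma$-invariant normal subgroups, using solvability and characteristic subgroups to force $G/H$ to be elementary abelian, and then invoking the classification of simple $\F_p[x]$-modules to realise $\sigma$ as multiplication by a scalar in $\F_{p^\alpha}$---is exactly the standard route and matches the argument in Balister--Wheeler.
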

The next lemma is a simple application of Alon's combinatorial nullstellensatz.
\begin{Lem}\label{field}
Let $A,B$ be two non-empty subsets of $\F_{p^\alpha}$ with $|A|=|B|$. Suppose that $\gamma\in\F_{p^\alpha}\setminus\{0,1\}$. Then
the cardinality of the restricted sumset
$$
A\stackrel{\gamma}+B=\{a+b:\, a\in A,\ b\in B,\ a\neq\gamma b\}
$$
is at least
$$
\min\{p,|A|+|B|-2\}.
$$
\end{Lem}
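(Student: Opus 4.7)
The plan is to adapt the Alon-Nathanson-Ruzsa polynomial method, via the two-variable Combinatorial Nullstellensatz, to handle the asymmetric restriction $a \neq \gamma b$. Set $n = |A| = |B|$ and suppose for contradiction that $|A \stackrel{\gamma}+ B| < d$, where $d = \min\{p, 2n-2\}$. Pick any set $E$ of size $d - 1$ with $A \stackrel{\gamma}+ B \subseteq E$ and consider
$$P(x, y) = (x - \gamma y) \prod_{e \in E}(x + y - e) \in \F_{p^\alpha}[x, y],$$
of total degree $d$. Then $P$ vanishes throughout $A \times B$: either $a = \gamma b$, killing the first factor, or $a + b \in A \stackrel{\gamma}+ B \subseteq E$, killing one factor of the product. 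To reach a contradiction via Combinatorial Nullstellensatz I need to exhibit a monomial $x^i y^j$ with $i + j = d$ and $i, j \leq n - 1$ whose coefficient in $P$ is non-zero in $\F_{p^\alpha}$. Since that monomial is of maximal total degree, the coefficient is determined entirely by the top piece $(x - \gamma y)(x + y)^{d-1}$, yielding
$$[x^i y^j]\,P = \binom{d-1}{j} - \gamma \binom{d-1}{j-1}.$$

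In the main regime $2n - 2 \leq p$ I set $d = 2n - 2$ and $i = j = n - 1$; the symmetry $\binom{2n-3}{n-1} = \binom{2n-3}{n-2}$ collapses the coefficient to $(1 - \gamma) \binom{2n-3}{n-1}$, which is non-zero in $\F_{p^\alpha}$ since $\gamma \neq 1$ and every factor of $\binom{2n-3}{n-1}$ is a positive integer strictly below $p$. In the boundary regime $2n - 2 > p$ I take $d = p$ and, for odd $p$, $i = (p-1)/2$, $j = (p+1)/2$ (these fit since $n \geq (p+3)/2$). Lucas' congruence $\binom{p-1}{k} \equiv (-1)^k \pmod{p}$ converts the coefficient into $(-1)^j (1 + \gamma)$; the $p = 2$ subcase yields $1 - \gamma$ instead.

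The main obstacle I foresee is the edge value $\gamma = -1$ with $p$ odd in the boundary regime, where the polynomial coefficient collapses to zero. This case is however vacuous in the intended application to Theorem \ref{t3}: there $\sigma$ has odd order, hence the scalar $\gamma$ induced on $G/H \cong \F_{p^\alpha}$ has odd multiplicative order in $\F_{p^\alpha}^{\ast}$ and cannot equal $-1$. If full generality were needed, the residual case $\gamma = -1$ reduces to bounding $|(A + B) \setminus \{0\}|$, which can be dispatched by combining the Cauchy-Davenport-type inequality (\ref{cdg}) for $\F_{p^\alpha}$ with a short analysis ruling out $A + B$ being a proper subgroup.
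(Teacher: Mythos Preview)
Your approach is exactly the paper's: apply the Combinatorial Nullstellensatz to the polynomial $(x-\gamma y)\prod(x+y-e)$ and compute the top coefficient at the balanced bidegree, using $|A|=|B|$ and $\gamma\neq 1$ to see it is nonzero. In the main regime $2n-2\leq p$ your argument and the paper's are identical.

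Where you differ is in the boundary regime $2n-2>p$. The paper simply writes ``without loss of generality assume $|A|+|B|-2\leq p$'' and never returns to the other case; this reduction is not actually justified (one cannot shrink $A$ and $B$ symmetrically to hit $|A'|+|B'|=p+2$ when $p$ is odd), but it does not matter because in the only application (the base case $|H|=1$ in the proof of Theorem~\ref{t3}) the hypothesis $2|A|-3<p(G)$ already forces $2n-2\leq p$. Your explicit treatment of the boundary regime via $i=(p-1)/2$, $j=(p+1)/2$ and Lucas is a genuine extension and is correct for $\gamma\neq -1$.

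One correction to your final paragraph: the residual case $\gamma=-1$ with $2n-2>p$ cannot be ``dispatched'', because the lemma as stated is in fact \emph{false} there. Take $p=3$, $\alpha=1$, $A=B=\F_3$, $\gamma=-1=2$: then $A\stackrel{\gamma}{+}B=(A+B)\setminus\{0\}=\{1,2\}$ has size $2$, while $\min\{p,2n-2\}=3$. So your Cauchy--Davenport sketch cannot close this gap (here $A+B$ is the whole group, not a proper subgroup). Your observation that $\gamma$ has odd multiplicative order when $\sigma$ does is correct and would salvage the boundary regime for the purposes of Theorem~\ref{t3}, but as noted above the size hypothesis there already keeps you in the main regime, so the point is moot.
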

\begin{proof} Without loss of generality, assume that $|A|=|B|\geq 2$ and $|A|+|B|-2\leq p$. Assume on the contrary that $|A\stackrel{\gamma}+B|<|A|+|B|-2$.
Define the polynomial
$$
F(x,y)=(x-\gamma y)(x+y)^{|A|+|B|-3-|A\stackrel{\gamma}+B|}\prod_{c\in A\stackrel{\gamma}+B}(x+y-c).
$$
Clearly $\deg F(x,y)=|A|+|B|-2$ and $F(x,y)$ vanishes over the Casterian product $A\times B$. 
Let $[x^{n}y^{m}]F(x,y)$ denote the coefficient of $x^{n}y^{m}$ in the expansion of $F(x,y)$. By \cite[Theorem 1.2]{A},
$[x^{|A|-1}y^{|B|-1}]F(x,y)$ must be zero. On the other hand, clearly 
\begin{align*}
[x^{|A|-1}y^{|B|-1}]F(x,y)=&[x^{|A|-1}y^{|B|-1}]F(x,y)(x-\gamma y)(x+y)^{|A|+|B|-3}\\
=&\binom{|A|+|B|-3}{|A|-2}-\gamma\binom{|A|+|B|-3}{|B|-2}\\=&
(|A|+|B|-3)\binom{|A|+|B|-4}{|A|-2}\bigg(\frac1{|B|-1}-\frac\gamma{|A|-1}\bigg).
\end{align*}
Since $|A|=|B|$ and $\gamma\neq 1$, $[x^{|A|-1}y^{|B|-1}]F(x,y)$ doesn't vanish. This leads a contradiction.
\end{proof}
Let $H$ be a normal subgroup of $G$ satisfying the requirments of Lemma \ref{bwl}. 
Suppose that $|H|=1$. Then $G$ is isomorphic to the additive group of some $\F_{p^\alpha}$.
Let $\chi$ be the isomorphism from $G$ to $\F_{p^\alpha}$. In view of Lemma \ref{bwl}, there exists $0\neq\gamma\in\F_{p^\alpha}$
such that $\chi(\sigma(a))=\gamma\cdot\chi(a)$ for any $a\in G$. Hence applying Lemma \ref{field}, for $\emptyset\neq A\subseteq G$, we have
$$
|\sigma(A)\stackrel{\sigma}{+}A|=|\gamma\cdot\chi(A)\stackrel{\gamma}{+}\chi(A)|\geq\min\{p(G),2|A|-2\},
$$
unless $\sigma$ is the identity automorphism. Of course, if $\sigma$ is the identity automorphism, then clearly Theorem \ref{t3} is true since $G$ is abelian now.

Below assume that $|H|>1$ and Theorem \ref{t3} holds for $H$ and $G/H$. Note that for $a,b\in G$ and $S,T\subseteq H$,
$$
(a+S)+(b+T)=a+b+(-b)+S+b+T=a+b+(\tau_b(S)+T).
$$
And we have
$$
(\sigma(a)+S)\stackrel{\sigma}+(a+T)=\sigma(a)+a+(\tau_a(S)\stackrel{\tau_a\sigma}+T),
$$
Similarly as the proof of Theorem \ref{t2}, write
$$
A=\bigcup_{j=1}^n(a_j+S_j).
$$
where those $S_j$ are non-empty subsets of $H$. 
Now $\sigma(A)\stackrel{\sigma}+A$ equals to
$$
\bigg(\bigcup_{\substack{1\leq i\leq n}}(\sigma(a_i)+a_i+(\tau_{a_i}\sigma(S_i)\stackrel{\tau_{a_i}\sigma}+S_i))\bigg)\cup\bigg(\bigcup_{\substack{1\leq i,j\leq n\\ i\neq j}}(\sigma(a_i)+a_j+(\tau_{a_j}\sigma(S_i)+S_j))\bigg).
$$

Assume that $n=1$. Since $|\sigma(A)\stackrel{\sigma}+A|=2|A|-3$, we have
$$|\tau_{a_1}\sigma(S_1)\stackrel{\tau_{a_1}\sigma}+S_1|=2|S_1|-3.$$
By the induction hypothesis, for $s_1,s_2\in T_1$, we have $\tau_{a_1}\sigma(s_1)+s_2=\tau_{a_1}\sigma(s_2)+s_1$, i.e.,
$$
-a_1+\sigma(s_1)+a_1+s_2=-a_1+\sigma(s_2)+a_1+s_1.
$$
It follows that
$$
\sigma(a_1+s_1)+(a_1+s_2)=\sigma(a_1+s_2)+(a_1+s_1).
$$
Hence Theorem \ref{t3} is true when $n=1$.

Suppose that $n\geq 2$ and
$|S_1|\geq|S_2|\geq\cdots |S_n|$. Let $\bar{A}=\{\bar{a}_1,\ldots,\bar{a}_n\}$.
By Lemma \ref{hall}, assume that
$$
\sigma(\bar{a}_1)+\bar{a}_1,\ldots,\sigma(\bar{a}_1)+\bar{a}_n,\sigma(\bar{a}_2)+\bar{a}_{i_2},\ldots,\sigma(\bar{a}_{n})+\bar{a}_{i_n}.
$$
are distinct elements of $\sigma(\bar{A})+\bar{A}$. Then by (\ref{sigmaeh}),
\begin{align*}
|\sigma(A)\stackrel{\sigma}+A|
\geq&|\tau_{a_1}\sigma(S_1)\stackrel{\tau_{a_1}\sigma}+S_1|+\sum_{j=2}^m|\tau_{a_1}\sigma(S_1)+S_j|+\sum_{j=2}^n|\tau_{a_{i_j}}\sigma(S_j)\stackrel{\tau_{a_{i_j}}\sigma}+S_{i_j}|\notag\\
\geq&|S_1|+|S_1|-3+\sum_{j=2}^m(|S_1|+|S_j|-1)+
\sum_{j=2}^n|\tau_{a_{i_j}}\sigma(S_j)\stackrel{\tau_{a_{i_j}}\sigma}+S_{i_j}|\notag\\
\geq&|A|-2+n(|S_1|-1)+
\sum_{j=2}^n(|S_j|-1)=2|A|-3+(n-1)(|S_1|-2),
\end{align*}
where in the third inequality we use the fact
$|S\stackrel{\sigma}+T|\geq |S|-1$.
Hence we have $|\sigma(A)\stackrel{\sigma}+A|\geq2|A|-2$ if $|S_1|\geq 3$.

Thus we must have $|S_1|\leq 2$. Suppose that $|S_1|=\cdots=|S_n|=1$.
Then from $|\sigma(A)\stackrel\sigma+A|=2n-3$, we know that $|\sigma(\bar{A})\stackrel\sigma+\bar{A}|=2n-3$. By the induction hypothesis,
$\sigma(\bar{a}_i)+\bar{a}_j=\sigma(\bar{a}_j)+\bar{a}_i$ for any distinct $1\leq i,j\leq n$.
Let $X_i=a_i+S_i=\{x_i\}$. Then for distinct $1\leq i,j\leq n$, 
$|(\sigma(X_i)+X_j)\cup(\sigma(X_j)+X_i)|=1$ implies that $\sigma(x_i)+x_j=\sigma(x_j)+x_i$.

Assume that $|S_1|=2$ and
$$
\sigma(\bar{a}_1)+\bar{a}_1,\sigma(\bar{a}_1)+\bar{a}_2,\ldots,\sigma(\bar{a}_1)+\bar{a}_n,\sigma(\bar{a}_{2})+\bar{a}_{i_2},\ldots,\sigma(\bar{a}_{n})+\bar{a}_{i_n}
$$
are distinct elements of $\sigma(\bar{A})+\bar{A}$. Then
\begin{equation}\label{AsB}
|\sigma(A)\stackrel{\sigma}+A|\geq\sum_{j=1}^n(|S_1|+|S_j|-1)-2+\sum_{j=2}^n|\sigma(a_i+S_i)\stackrel{\sigma}+(a_{i_j}+S_{i_j})|\geq2|A|-3.
\end{equation}
In the first inequality of (\ref{AsB}), the equality holds only if
$$
|\tau_{a_1}\sigma(S_1)\stackrel{\tau_{a_1}\sigma}+S_1|=2|S_1|-3.
$$
And the equality holds in the second inequality of (\ref{AsB}) only if $j=i_j$ for all $2\leq j\leq n$ and
\begin{equation}\label{SsT}
|\tau_{a_j}\sigma(S_j)\stackrel{\tau_{a_j}\sigma}+S_j|=|S_j|-1.
\end{equation}

Now $\sigma(A)\stackrel{\sigma}+A$ coincides with
\begin{equation}\label{AsBs}
\bigcup_{j=2}^n(\sigma(a_1)+a_j+(\tau_{a_j}\sigma(S_1)+S_j))
\cup\bigcup_{j=1}^n(\sigma(a_j)+a_j+(\tau_{a_j}\sigma(S_j)\stackrel{\tau_{a_j}\sigma}+S_j)).
\end{equation}
Furthermore, we must have
$$
\sigma(\bar{A})\stackrel{\sigma}+\bar{A}\subseteq\{\sigma(\bar{a}_1)+\bar{a}_1,\sigma(\bar{a}_1)+\bar{a}_2,\ldots,\sigma(\bar{a}_1)+\bar{a}_n,\sigma(\bar{a}_2)+\bar{a}_2,\sigma(\bar{a}_n)+\bar{a}_n\}.
$$
Otherwise, there will exist distinct $2\leq j_1,j_2\leq n$ such that $$\sigma(a_{j_1})+a_{j_2}+(\tau_{a_{j_2}}\sigma(S_{j_1})\stackrel{\tau_{a_{j_2}}\sigma}+S_{j_2})\subseteq \sigma(A)\stackrel{\sigma}+A$$ is not included in (\ref{AsBs}).

Let
$$
J_1=\{1\leq j\leq n:\, \sigma(\bar{a}_j)+\bar{a}_j\in (\sigma(\bar{A})\stackrel{\sigma}+\bar{A})\setminus\{\sigma(\bar{a}_1)+\bar{a}_2,\cdots,\sigma(\bar{a}_1)+\sigma(\bar{a}_n)\}\},
$$
and let
$J_2=\{2,3,\ldots,n\}\setminus J_1$. We must have $|S_{j}|=2$ for all $j\in J_1$. Otherwise, if $|S_{j_0}|=1$ for some $j_0\in J_1$, then there exist
distinct $2\leq j_1,j_2\leq n$ such that $\sigma(\bar{a}_{j_0})+\bar{a}_{j_0}=\sigma(\bar{a}_{j_1})+\bar{a}_{j_2}$. By $(\ref{SsT})$, $\tau_{a_{j_0}}\sigma(S_{j_0})\stackrel{\tau_{a_{j_0}}\sigma}+S_{j_0}$ is empty.
But $$\sigma(a_{j_1}+S_{j_1})\stackrel{\sigma}+\sigma(a_{j_2}+S_{j_2})=\sigma(a_{j_1}+S_{j_1})+\sigma(a_{j_2}+S_{j_2})$$
is not empty.

We also have $n\leq 3$. Otherwise, for $n\geq 4$, it is easy to see that
$|J_2|\leq 2$ and $|J_1|\geq 2$. Hence we may
find $j_0\in J_1$ and distinct $2\leq j_1,j_2\leq m$ such that $\sigma(\bar{a}_{j_0})+\bar{a}_{j_0}=\sigma(\bar{a}_{j_1})+\bar{a}_{j_2}$ and
$\max\{|S_{j_1}|,|S_{j_2}|\}=2$. Thus in view of (\ref{SsT}), $|\tau_{a_{j_0}}\sigma(S_{j_0})\stackrel{\tau_{a_{j_0}}\sigma}+S_{j_0}|=1$ and
$$
\sigma(a_{j_1}+S_{j_1})\stackrel{\sigma}+(a_{j_2}+S_{j_2})=\sigma(a_{j_1})+a_{j_2}+(\tau_{a_{j_2}}\sigma(S_{j_2})+S_{j_2})
$$
has at least two elements. 

Now we have showed $|\sigma(A)\stackrel{\sigma}+A|\geq2|A|-3$ is impossible when $n\geq 4$. However, the case $|S_1|=2$ and $n=2,3$ are the most diffcult part in the proof of 
Theorem \ref{t2}. We shall propose its proof in the final section.

\section{Proof of Theorem \ref{t2}: The case $|S_1|=2$ and $n=2,3$}
\setcounter{equation}{0}\setcounter{Thm}{0}\setcounter{Lem}{0}\setcounter{Cor}{0}
\begin{Lem}\label{xy} Suppose that $\sigma$ is an automorphism of $G$ with odd order.\medskip\smallskip

\noindent(i) Suppose that $p(G)>2$ and $A=\{x_1,x_2\}$ and $B=\{y\}$ are two subsets of $G$. If
$$
|(\sigma(A)+B)\cup(\sigma(B)+A)|=2,
$$
then $\sigma(x_i)+y=\sigma(y)+x_i$ for $i=1,2$.\medskip\smallskip

\noindent(ii) Suppose that $p(G)>3$ and $A=\{x_1,x_2\}$ and $B=\{y_1,y_2\}$ are two subsets of $G$. 
If
$$
|\sigma(A)\stackrel{\sigma}+A|=|\sigma(B)\stackrel{\sigma}+B|=1,\quad |(\sigma(A)+B)\cup(\sigma(B)+A)|=3,
$$
then $\sigma(x_i)+y_j=\sigma(y_j)+x_i$ for $1\leq i,j\leq2$.
\end{Lem}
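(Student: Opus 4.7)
For (i), I would apply an odd-order-automorphism argument. The hypothesis $|U|=|V|=2$ and $|U\cup V|=2$ forces $U = V$, so either the natural pairing holds (which is the desired conclusion) or the swap $\sigma(x_1)+y = \sigma(y)+x_2$ and $\sigma(x_2)+y = \sigma(y)+x_1$ does. In the swap case, define $g(z) = \sigma^{-1}(\sigma(y)+z-y)$; then $g(x_1)=x_2$, $g(x_2)=x_1$, so $g^2(x_1)=x_1$. Conjugating $g$ by the translation $h(z)=z-y$ gives $hgh^{-1} = \tau_{-y}\circ\sigma^{-1}$, an automorphism of odd order by the remarks preceding Lemma~\ref{bwl}. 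Since $(\tau_{-y}\sigma^{-1})^2$ fixes $x_1-y$ and the order is odd, $\tau_{-y}\sigma^{-1}$ itself fixes $x_1-y$; unwinding gives $\sigma(x_1)+y = \sigma(y)+x_1$, which combined with the swap forces $\sigma(x_1) = \sigma(x_2)$ and hence $x_1 = x_2$, a contradiction.

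For (ii), first note that since $|A| = |B| = 2$ both $|\sigma(A)+B|$ and $|\sigma(B)+A|$ are at least $3$, so $|U\cup V|=3$ forces $|U|=|V|=3$ and $U = V$. Each sumset has exactly one coincidence among its four tuples; after possibly interchanging $y_1$ and $y_2$ I may assume the $U$-coincidence is $\sigma(x_1)+y_2 = \sigma(x_2)+y_1$, which combined with $\sigma(x_1)+x_2 = \sigma(x_2)+x_1$ yields $x_1 x_2^{-1} = y_1 y_2^{-1}$. For the $V$-coincidence, the ``diagonal'' alternative $\sigma(y_1)+x_1 = \sigma(y_2)+x_2$ would force $(x_1 x_2^{-1})^2 = e$, impossible in the odd-order group $G$; so the $V$-coincidence must also be antidiagonal: $\sigma(y_1)+x_2 = \sigma(y_2)+x_1$. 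Writing $\phi_{ij} = \sigma(x_i)+y_j$ and $\psi_{ij} = \sigma(y_j)+x_i$, the equality $U = V$ of three-element sets corresponds to one of six bijections between $\{\phi_{11},\phi_{12},\phi_{22}\}$ and $\{\psi_{11},\psi_{12},\psi_{22}\}$, and the identity bijection is the desired conclusion.

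To rule out the five non-identity bijections, I would use (i) for the ``column-swap'' bijection $(\phi_{11} = \psi_{12},\ \phi_{12} = \psi_{11},\ \phi_{22}=\psi_{22})$: the column-$y_1$ set $\{\phi_{11},\phi_{21},\psi_{11},\psi_{21}\}$ collapses to size $2$, so part (i) applied to $A$ and $B'=\{y_1\}$ forces $\phi_{11} = \psi_{11}$, directly contradicting the swap. For each of the other four bijections I would translate the three defining equalities into relations of the form $\sigma(x_i^{-1}y_j) = y_k x_\ell^{-1}$; pairing two such relations that share a left-hand side eliminates $\sigma$, and combining with $x_1 x_2^{-1} = y_1 y_2^{-1}$ reduces to an identity $(x_1 x_2^{-1})^k = e$ with $k \in \{2,3\}$. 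Since $p(G) > 3$, neither order is possible, so $x_1 = x_2$, a contradiction. The main obstacle will be this last case analysis, especially the two bijections that collapse to $(x_1x_2^{-1})^3 = e$ rather than $(x_1x_2^{-1})^2 = e$: these require a careful simultaneous use of the $A$-commutativity, $B$-commutativity, and all three bijection equalities, and they are precisely what makes the hypothesis $p(G) > 3$ (rather than the weaker $p(G) > 2$ of (i)) essential.
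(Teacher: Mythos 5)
Your proposal is correct, and in part (ii) it reorganizes the argument in a genuinely different way from the paper, so a comparison is worth recording. For (i) the underlying idea is the same as the paper's: the paper runs the explicit iteration $\sigma^{2k}(x_1)=\cdots$ with $k=(h|G|+1)/2$, while you package the identical trick abstractly --- $\tau_{-y}\sigma^{-1}$ lies in $\Inn G\cdot\langle\sigma\rangle$, hence has odd order, and an odd-order map whose square fixes $x_1-y$ must fix it --- which is cleaner but not a new route. For (ii) your decomposition differs from the paper's: you first normalize both coincidences to the ``antidiagonal'' pattern $\sigma(x_1)+y_2=\sigma(x_2)+y_1$ and $\sigma(y_1)+x_2=\sigma(y_2)+x_1$ (the diagonal alternative indeed forces an element of order $2$, impossible since $|G|$ is odd), and then classify the equality of the two $3$-element sets by the induced bijection, killing the column-swap by (i) and the remaining four bijections by $\sigma$-elimination; the paper instead fixes the auxiliary equality $\sigma(y_1)+x_1=\sigma(x_1)+y_2$ (or its alternative) and enumerates six configurations (a)--(f), three of which are removed by re-running the odd-order argument of (i) and the hardest of which, (a), ends in $3(y_1-y_2)=0$. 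I checked that your eliminations close: the two transpositions other than the column-swap collapse, using $\sigma(-x_2+x_1)=x_1-x_2$ from the $A$-commutativity, to an element of order dividing $2$, and the two $3$-cycles to $3(x_1-x_2)=0$, exactly the $k\in\{2,3\}$ dichotomy you predict, with $p(G)>3$ needed only for the latter; your framing makes the role of $p(G)>3$ more transparent than the paper's case list. One caution: for the $3$-cycles the literal mechanism you describe (pairing two relations with a common left-hand side) only reproduces $x_1-x_2=y_1-y_2$; what finishes those cases is equating two expressions for $\sigma(y_1)$ (resp.\ $\sigma(y_2)$) obtained from the bijection equalities together with the $A$-commutativity and the antidiagonal coincidence of $\sigma(B)+A$, and in the nonabelian setting one must track left versus right differences (the conjugation by $y_2$ relating $-x_2+x_1$ to $x_1-x_2$); since you flagged that these cases need a careful simultaneous use of all the relations, this is an implementation detail rather than a gap.
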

\begin{proof} (i) Clearly $|(\sigma(A)+B)\cup(\sigma(B)+A)|=2$ implies that $\sigma(x_1)+y$ equals either $\sigma(y)+x_1$ or $\sigma(y)+x_2$. 
Assume that
\begin{equation}\label{a1b}\sigma(x_1)+y=\sigma(y)+x_2.
\end{equation}
Then we also have \begin{equation}\label{a2b}\sigma(x_2)+y=\sigma(y)+x_1.
\end{equation}
By (\ref{a1b}), we have
$
\sigma(x_2)=\sigma(y)+x_1-y$. Substituting this to (\ref{a2b}), we get
$$
\sigma^2(x_1)+\sigma(y)=\sigma^2(y)+\sigma(x_2)=\sigma^2(y)+\sigma(y)+x_1-y,
$$
i.e., 
$$\sigma^2(x_1)=\sigma^2(y)+\sigma(y)+x_1-y-\sigma(y).$$
By an easy induction, we have
$$
\sigma^{2k}(x_1)=\sigma^{2k}(y)+\sigma^{2k-1}(y)+\cdots+\sigma(y)+x_1-y-\sigma(y)-\cdots-\sigma^{2k-1}(y).
$$
Let $h$ be the order of $\sigma$ and $k=(h|G|+1)/2$. Then
\begin{align*}
\sigma^{2k}(y)+\sigma^{2k-1}(y)+\cdots+\sigma^2(y)
=&\sigma\bigg(\sum_{j=0}^{|G|-1}(\sigma^{jh+h}(y)+\cdots+\sigma^{jh+1}(y))\bigg)\\
=&
\sigma\big(|G|(\sigma^{h}(y)+\cdots+\sigma(y))\big)=0.
\end{align*}
Similarly
$$-\sigma(y)-\cdots-\sigma^{2k-1}(y)=|G|(-\sigma(y)-\cdots-\sigma^{h}(y))=0.
$$
Hence
$$
\sigma(x_1)=\sigma^{2k}(x_1)=\sigma(y)+x_1-y,
$$
which clearly contradicts with our assumption (\ref{a1b}).

(ii) Assume that our assertion is not true. Clearly $|\sigma(A)\stackrel{\sigma}+A|=1$ implies that
\begin{equation}\label{x1x2}\sigma(x_1)+x_2=\sigma(x_2)+x_1.\end{equation}
Similarly, it follows from $|\sigma(B)\stackrel{\sigma}+B|=1$ that \begin{equation}\label{y1y2}\sigma(y_1)+y_2=\sigma(y_2)+y_1.\end{equation}
In view of (i), we may assume that $|(\sigma(A)+y_1)\cup(\sigma(y_1)+A)|=3$. That is,
$$(\sigma(A)+B)\cup(\sigma(B)+A)=\{\sigma(x_1)+y_1,\sigma(x_2)+y_1,\sigma(y_1)+x_1\}$$ or $$
(\sigma(A)+B)\cup(\sigma(B)+A)=\{\sigma(x_1)+y_1,\sigma(x_2)+y_1,\sigma(y_1)+x_2\}.$$ 
On the other hand,
$$
\sigma(A)+B=\{\sigma(x_1)+y_1,\sigma(x_2)+y_1,\sigma(x_1)+y_2,\sigma(x_2)+y_2\}.
$$
So without loss of generality, we may assume that 
\begin{equation}\label{a1b1}
\sigma(x_1)+y_1=\sigma(x_2)+y_2.
\end{equation}
Thus
$$
\sigma(A)+B=\{\sigma(x_1)+y_1,\sigma(x_2)+y_1,\sigma(x_1)+y_2\}.
$$

Now we have either $$\sigma(y_1)+x_1=\sigma(x_1)+y_2$$ or $$\sigma(y_1)+x_2=\sigma(x_1)+y_2.$$ 
Assume that 
$\sigma(y_1)+x_1=\sigma(x_1)+y_2$. 
There are the following six subcases:
$${\rm (a)}
\begin{cases}&\sigma(x_2)+y_2=\sigma(x_1)+y_1,\\
&\sigma(y_1)+x_2=\sigma(x_1)+y_1,\\
&\sigma(y_2)+x_1=\sigma(x_2)+y_1,\\
&\sigma(y_2)+x_2=\sigma(x_1)+y_2,\\
&\sigma(y_1)+x_1=\sigma(x_1)+y_2,\\
\end{cases}\qquad
{\rm (b)}\begin{cases}&\sigma(x_2)+y_2=\sigma(x_1)+y_1,\\
&\sigma(y_1)+x_2=\sigma(x_2)+y_1,\\
&\sigma(y_2)+x_1=\sigma(x_2)+y_1,\\
&\sigma(y_2)+x_2=\sigma(x_1)+y_2,\\
&\sigma(y_1)+x_1=\sigma(x_1)+y_2,\\
\end{cases}
$$

$${\rm (c)}\begin{cases}&\sigma(x_2)+y_2=\sigma(x_1)+y_1,\\
&\sigma(y_2)+x_1=\sigma(x_1)+y_1,\\
&\sigma(y_1)+x_2=\sigma(x_2)+y_1,\\
&\sigma(y_2)+x_2=\sigma(x_1)+y_2,\\
&\sigma(y_1)+x_1=\sigma(x_1)+y_2,\\
\end{cases}\qquad
{\rm (d)}\begin{cases}&\sigma(x_2)+y_2=\sigma(x_1)+y_1,\\
&\sigma(y_1)+x_2=\sigma(x_1)+y_1,\\
&\sigma(y_2)+x_1=\sigma(x_1)+y_1,\\
&\sigma(y_2)+x_2=\sigma(x_1)+y_2,\\
&\sigma(y_1)+x_1=\sigma(x_1)+y_2,\\
\end{cases}$$

$${\rm (e)}\begin{cases}&\sigma(x_2)+y_2=\sigma(x_1)+y_1,\\
&\sigma(y_1)+x_2=\sigma(x_1)+y_1,\\
&\sigma(y_2)+x_1=\sigma(x_1)+y_1,\\
&\sigma(y_2)+x_2=\sigma(x_2)+y_1,\\
&\sigma(y_1)+x_1=\sigma(x_1)+y_2,\\
\end{cases}\qquad
{\rm (f)}\begin{cases}&\sigma(x_2)+y_2=\sigma(x_1)+y_1,\\
&\sigma(y_2)+x_2=\sigma(x_1)+y_1,\\
&\sigma(y_1)+x_2=\sigma(x_2)+y_1,\\
&\sigma(y_2)+x_1=\sigma(x_2)+y_1,\\
&\sigma(y_1)+x_1=\sigma(x_1)+y_2.\\
\end{cases}$$

First, it is impossible that $\sigma(y_2)+x_1=\sigma(x_1)+y_1$ and $\sigma(y_1)+x_1=\sigma(x_1)+y_2$ hold simultaneously,
In fact, if it is true, then we have
$$
\sigma(x_1)+y_1-x_1=\sigma(y_2)=\sigma(-\sigma(x_1)+\sigma(y_1)+x_1),
$$
i.e.,
$$
\sigma^{2}(y_1)=\sigma^{2}(x_1)+\sigma(x_1)+y_1-x_1-\sigma(x_1).
$$
By the discussions in the proof of (i), we can get $\sigma(y_1)=\sigma(x_1)+y_1-x_1$.
Thus (c), (d) and (e) are omitted.

Second, $\sigma(y_2)+x_2=\sigma(x_1)+y_2$ and  $\sigma(y_1)+x_2=\sigma(x_2)+y_1$ cannot simultaneously hold.
In fact, $x_2=-\sigma(y_2)+\sigma(x_1)+y_2$ implies that
$$
\sigma(x_2)=\sigma\big(-\sigma(y_2)+\sigma(x_1)+y_2\big)=-\sigma^2(y_2)+\sigma^2(x_1)+\sigma(y_2).
$$
If $\sigma(y_1)+x_2=\sigma(x_2)+y_1$, then we have
\begin{align*}
\sigma(y_1)+(-\sigma(y_2)+\sigma(x_1)+y_2)=&(-\sigma^2(y_2)+\sigma^2(x_1)+\sigma(y_2))+y_1\\
=&-\sigma^2(y_2)+\sigma^2(x_1)+\sigma(y_1)+y_2,
\end{align*}
where in the second equality we use (\ref{y1y2}).
Thus we get
$$
y_1-y_2+x_1=-\sigma(y_2)+\sigma(x_1)+y_1.
$$
It follows that
$$
\sigma(x_1)+y_1=\sigma(y_2)+y_1-y_2+x_1=\sigma(y_1)+y_2-y_2+x_1=\sigma(y_1)+x_1.
$$
Hence (b) is impossible. Similarly, $\sigma(y_2)+x_2=\sigma(x_1)+y_1$ and  $\sigma(y_1)+x_2=\sigma(x_2)+y_1$ cannot simultaneously hold. 
This negates (f).

Finally, let us turn to (a). In view of the fifth equation of (a), we have
\begin{equation}\label{s2x1}
\sigma^2(x_1)=\sigma\big(\sigma(y_1)+x_1-y_2\big)=\sigma^2(y_1)+\sigma(x_1)-\sigma(y_2)=\sigma^2(y_1)+\sigma(y_1)+x_1-y_2-\sigma(y_2).
\end{equation}
By the fourth equation of (a), we have
$$
\sigma(x_2)=\sigma\big(-\sigma(y_2)+\sigma(x_1)+y_2\big)=-\sigma^2(y_2)+\sigma^2(x_1)+\sigma(y_2).
$$
So by (\ref{s2x1}),
$$
\sigma(x_2)+y_2=-\sigma^2(y_2)+\sigma^2(x_1)+\sigma(y_2)+y_2=-\sigma^2(y_2)+\sigma^2(y_1)+\sigma(y_1)+x_1.
$$
It follows from the first equation of (a) that
$$
\sigma(x_1)=-\sigma^2(y_2)+\sigma^2(y_1)+\sigma(y_1)+x_1-y_1.
$$
i.e.,
\begin{equation}\label{s2x1a}
\sigma^2(x_1)=-\sigma^3(y_2)+\sigma^3(y_1)+\sigma^2(y_1)+(-\sigma^2(y_2)+\sigma^2(y_1)+\sigma(y_1)+x_1-y_1)-\sigma(y_1).
\end{equation}
On the other hand, by the third equation of (a) , we have
$\sigma(x_2)=\sigma(y_2)+x_1-y_1$. And by the second equation of (a), we get
$$
\sigma\big(\sigma(y_1)+x_2\big)=\sigma^2(y_1)+(\sigma(y_2)+x_1-y_1)=\sigma^2(x_1)+\sigma(y_1),
$$
i.e.,
\begin{equation}\label{s2x1b}
\sigma^2(x_1)=\sigma^2(y_1)+\sigma(y_2)+x_1-y_1-\sigma(y_1).
\end{equation}
Combining (\ref{s2x1a}) and (\ref{s2x1b}) and recalling $\sigma^3(y_2)+\sigma^2(y_1)=\sigma^3(y_1)+\sigma^2(y_2)$ by (\ref{y1y2}), we obtain that
$$
\sigma^2(y_1)-\sigma^2(y_2)+\sigma^2(y_1)+\sigma(y_1)=\sigma^2(y_2)+\sigma(y_2),
$$
i.e.,
$$
2\sigma(-y_2+y_1)=y_2-y_1.
$$
However,
$\sigma(y_1)+y_2=\sigma(y_2)+y_1$ implies that $\sigma(-y_2+y_1)=y_1-y_2$. So we get
$$
3(y_1-y_2)=0.
$$
Hence (a) is also impossible.

Now we have proved that $\sigma(y_1)+x_1=\sigma(x_1)+y_2$ is impossible. And the case $\sigma(y_1)+x_2=\sigma(x_1)+y_2$ can be proved similarly.
\end{proof}
Let us return the proof of Theorem \ref{t3}. Suppose that $n=3$. Note that
$$
(\sigma(a_1)+a_2+(\tau_{a_2}\sigma(S_1)+S_2))\cup
(\sigma(a_2)+a_3+(\tau_{a_3}\sigma(S_2)+S_3))\cup(\sigma(a_3)+a_1+(\tau_{b_1}\sigma(S_3)+S_1))
$$
contains at least
$$
(|S_1|+|S_2|-1)+(|S_2|+|S_3|-1)+(|S_3|+|S_1|-1)=2|A|-3
$$
elements. So we have
$$
\sigma(\bar{A})\stackrel{\sigma}+\bar{A}=\{\sigma(\bar{a}_1)+\bar{a}_2,\sigma(\bar{a}_2)+\bar{a}_3,\sigma(\bar{a}_3)+\bar{a}_1\}.
$$
And by the induction hypothesis, $\sigma(\bar{a}_i)+\bar{a}_j=\sigma(\bar{a}_j)+\bar{a}_i$ for $1\leq i,j\leq 3$.
Furthermore, for $1\leq i\leq 3$, if $\sigma(\bar{a}_i)+\bar{a}_i\not\in\sigma(\bar{A})\stackrel{\sigma}+\bar{A}$, then $\tau_{a_i}\sigma(S_i)\stackrel{\tau_{a_i}\sigma}+S_i$ is empty, i.e.,
$|S_i|=1$.

Since $|S_1|=2$,  we have $\sigma(\bar{a}_1)+\bar{a}_1\in\bar{A}\stackrel{\sigma}+\bar{A}$, i.e., 
$\sigma(\bar{a}_1)+\bar{a}_1=\sigma(\bar{a}_2)+\bar{a}_3$. It follows that 
$\sigma(\bar{a}_2)+\bar{a}_2$, $\sigma(\bar{a}_3)+\bar{a}_3\not\in\bar{A}\stackrel{\sigma}+\bar{A}$
and $|S_2|=|S_3|=1$.  Let $X_i=a_i+S_i$. Assume that $X_1=\{x_1,x_2\}$, $X_2=\{y_1\}$ and $X_3=\{y_2\}$. Now we have
$$|\sigma(X_1)\stackrel\sigma+X_1|=|(\sigma(X_2)+X_3)\cup(\sigma(X_3)+X_2)|=1$$ and $$|(\sigma(X_1)+X_2)\cup(\sigma(X_2)+X_1)|=|(\sigma(X_1)+X_3)\cup(\sigma(X_3)+X_1)|=2.$$
Evidently $|\sigma(X_1)\stackrel\sigma+X_1|=1$ implies $\sigma(x_1)+x_2=\sigma(x_2)+x_1$. And 
it follows from $|(\sigma(X_2)+X_3)\cup(\sigma(X_3)+X_2)|=1$ that $\sigma(y_1)+y_2=\sigma(y_2)+y_1$.
By (i) of Lemma \ref{xy},
$|(\sigma(X_1)+X_2)\cup(\sigma(X_2)+X_1)|=2$ implies $\sigma(x_i)+y_1=\sigma(y_1)+x_i$ for $i=1,2$. Similarly,
we have $\sigma(x_i)+y_2=\sigma(y_2)+x_i$ for $i=1,2$. So Theorem \ref{t3} holds for $n=3$.

Suppose that $n=2$. Clearly we have $\sigma(\bar{a}_1)+\bar{a}_2=\sigma(\bar{a}_2)+\bar{a}_1$. 
Let $X_i=a_i+S_i$. The case $|S_2|=1$ easily follows from the discussions for $n=3$. Assume that $|S_2|=2$.
Then we have
$$
|\sigma(X_1)\stackrel{\sigma}+X_1|=|\sigma(X_2)\stackrel{\sigma}+X_2|=1
$$
and
$$
|(\sigma(X_1)\stackrel{\sigma}+X_2)\cup(\sigma(X_2)\stackrel{\sigma}+X_1)|=2.
$$
Applying (ii) of Lemma \ref{xy}, we get the desired result.
Thus the proof of Theorem \ref{t3} is concluded.\qed

\end{document}